\numberwithin{equation}{section}
\theoremstyle{plain}
\newtheorem{theorem}                 {\bf Theorem}
\newtheorem{lemma}        [theorem]  {\bf Lemma}
\title{Internal gravity-capillary solitary waves in finite depth}
\author{Dag Nilsson}
\theoremstyle{definition}
\DeclareMathOperator{\sgn}{sgn}
\newcommand\norm[1]{\left\lVert#1\right\rVert}
\newcommand{\abs}[1]{\lvert#1\rvert}
\begin{document}
\maketitle
\begin{abstract}
We consider two-dimensional inviscid irrotational flow in a two layer fluid under the effects of gravity and interfacial tension. The upper fluid is bounded above by a rigid lid, and the lower fluid is bounded below by a rigid bottom. We use a spatial dynamics approach and formulate the steady Euler equations as a Hamiltonian system, where we consider the unbounded horizontal coordinate $x$ as a time-like coordinate. The linearization of the Hamiltonian system is studied, and bifurcation curves in the $(\beta,\alpha)$-plane are obtained, where $\alpha$ and $\beta$ are two parameters. The curves depend on two additional parameters $\rho$ and $h$, where $\rho$ is the ratio of the densities and $h$ is the ratio of the fluid depths. However, the bifurcation diagram is found to be qualitatively the same as for surface waves. In particular we find that a Hamiltonian-Hopf bifurcation, Hamiltonian real 1:1 resonance and a Hamiltonian $0^2$-resonance occur for certain values of $(\beta,\alpha)$. Of particular interest are solitary wave solutions of the Euler equations. Such solutions correspond to homoclinic solutions of the Hamiltonian system. We investigate the parameter regimes where the Hamiltonian-Hopf bifurcation and the Hamiltonian real 1:1 resonance occur. In both these cases we perform a center manifold reduction of the Hamiltonian system and show that homoclinic solutions of the reduced system exist. In contrast to the case of surface waves we find parameter values $\rho$ and $h$ for which the leading order nonlinear term in the reduced system vanishes. We make a detailed analysis of this phenomenon in the case of the real 1:1 resonance. We also briefly consider the Hamiltonian $0^2$-resonance and recover the results found by Kirrmann \cite{KM}.
\end{abstract}
%\thispagestyle{empty}
%\centerline {\bf\Large Abstract}
\section{Introduction}
\subsection{Internal waves}
\begin{figure}\centering
\includegraphics{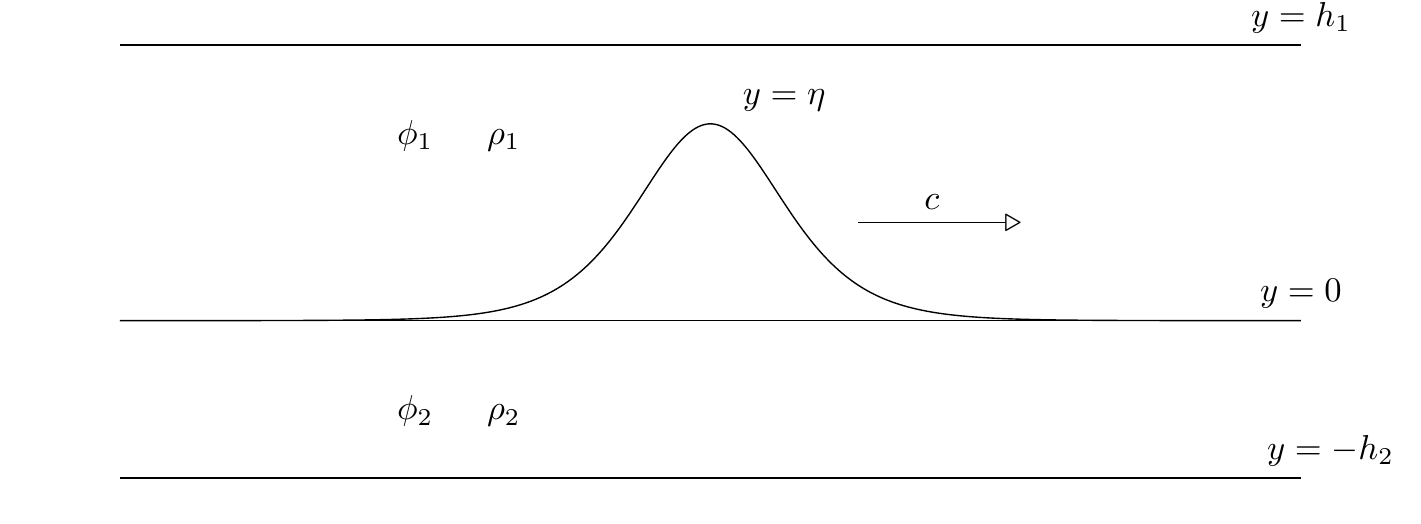}
\caption{Symmetric solitary internal wave of elevation}
\end{figure}
Internal waves are waves which propagate along the interface of two immiscible fluids of different density. In this paper we will study two-dimensional steady solitary internal waves in finite depth, under the influence of gravity and interfacial tension. These are waves which travel with constant speed $c$ in some distinguished direction, which we will assume to be the positive $x$-direction, and whose wave profile tends to $0$ as $\abs{x}\rightarrow\infty$. The flow is assumed to be inviscid and irrotational, and the density of each layer is assumed to be constant. In addition we assume that the upper fluid is bounded above by a rigid horizontal lid and the lower fluid is bounded below by a rigid horizontal bottom. The two fluids are separated by an interface $\eta$ in the infinite strip $\{(x,y)\in \mathbb{R}^2\text{ }\vert\text{ }-h_2\leq y \leq h_1\}$, where $h_1,h_2$ are positive numbers. Denote by $\rho_1$ the density of the upper fluid and $\rho_2$ the density of the lower fluid. We assume that $\rho_2>\rho_1$.
Denote by $\phi_1,\phi_2$ the velocity potentials of the upper and lower fluid respectively. In a moving frame of reference, the velocity potentials satisfy the equations
\begin{align}
&\Delta\phi_1=0 \quad \text{for } \eta<y<h_1 \label{e1},\\
&\Delta\phi_2=0 \quad \text{for } -h_2<y<\eta \label{e_2},
\end{align}
with boundary conditions
\begin{align}
&\phi_{1y}=0 && \text{on } y=h_1\label{e3},\\
&\phi_{2y}=0 && \text{on } y=-h_2\label{e4},\\
&-c\eta_x=\phi_{1y}-\eta_x\phi_{1x} && \text{on } y=\eta \label{e5},\\
&-c\eta_x=\phi_{2y}-\eta_x\phi_{2x} && \text{on } y=\eta \label{e6},\\
&\rho_2(-c\phi_{2x}+\frac{1}{2}\vert \nabla\phi_2\vert^2+g\eta )-\rho_1(-c\phi_{1x}+\frac{1}{2}\vert\nabla\phi_1\vert^2+g\eta )=\sigma \left(\frac{\eta_x}{\sqrt{1+\eta_x^2}}\right)_x&&\text{on } y=\eta \label{e7},%se (1.30) i Johnson
\end{align}
where $\sigma$ is the coefficient of interfacial tension. Note that for $\rho_1=0$ we obtain the surface wave problem. A solitary wave is a solution of \eqref{e1}--\eqref{e7} together with the asymptotic conditions
\begin{equation*} 
\lim_{\abs{x}\rightarrow \infty}\eta=0,\ \lim_{\abs{x}\rightarrow\infty}(\phi_{ix},\phi_{iy})=(0,0),\ i=1,2.
\end{equation*}
We will construct solitary-wave solutions to these equations using the method of spatial dynamics. The idea, which goes back to Kirchgässner \cite{K82}, is to formulate the water wave problem as an evolution equation
\begin{equation}
u_x=Ku+F(u),\label{cmeq}
\end{equation}
where $K$ is a linear operator and $F(u)=\mathcal{O}(u^2)$. The variable $x$ is an unbounded spatial coordinate which is treated as a time-like coordinate. The evolution equation is ill-posed, but bounded solutions can be studied using a parameter dependent version of the center manifold theorem. This theorem is used to obtain a finite-dimensional system on the center manifold, which is locally equivalent to the original system \eqref{cmeq}. The dimension of the center manifold is equal to the number of purely imaginary eigenvalues of the operator $K$. It is therefore necessary to study the spectrum of $K$.
\subsection{Previous work on surface waves}
We give here a short review of some of the results that have been obtained for surface waves.
In the water wave problem the spectrum of $K$ depends on two parameters $\alpha$ and $\beta$, where $\alpha=\frac{gh}{c^2}$, $\beta=\frac{\sigma}{\rho hc^2}$. Here $c$ is the speed of the solitary wave, $\sigma$ is the coefficient of surface tension, $h$ is the water depth, $\rho$ is the density of the fluid and $g$ is the gravitational constant\footnote{When considering surface waves as internal waves with $\rho_1=0$ we should use $h_2$ instead of $h$ and $\rho_2$ instead of $\rho$ in the definition of $\alpha$ and $\beta$.}. Kirchgässner \cite{KI} identified four critical curves in the $(\beta,\alpha)$ plane where the spectrum of $K$ changes. He also studied the region $\alpha=1+\epsilon$ and $\beta>\frac{1}{3}$, where $0<\epsilon\ll1$, and showed that a Hamiltonian $0^2$ resonance occurs for $\epsilon=0$. For $\alpha=1$ and $\beta>\frac{1}{3}$ the imaginary part of the spectrum consists of the $0$-eigenvalue, which is of algebraic multiplicity two. This means that the center manifold is two-dimensional in this case. There exist a homoclinic solution for the two-dimensional reduced system when $\epsilon=0$, and this solution persists for $\epsilon>0$. The corresponding wave profile is a solitary wave of depression. This region was also studied in \cite{AK89}, \cite{SA} using different techniques.

Iooss and Kirchgässner \cite{IK} investigated a critical curve where a Hamiltonian-Hopf bifurcation occurs (called a 1:1 resonance for systems that do not have a Hamiltonian structure). For $\alpha$ and $\beta$ belonging to this curve, the imaginary part of the spectrum of $K$ is equal to $\pm ik$, and these are eigenvalues of algebraic multiplicity two, which implies that the corresponding center manifold is four-dimensional. There exist two families of homoclinic solutions of the reduced system, if certain coefficients have the correct signs. These solutions correspond to periodic wave trains with exponentially decaying envelopes and are sometimes called bright solitary waves. The term bright solitary wave comes from its connection with the nonlinear Schrödinger equation (see \eqref{nlsintro}). This will be discussed more below, in connection with the Hamiltonian-Hopf bifurcation for internal waves, were we also consider dark solitary waves. In \cite{DI2} it was verified that these coefficients do have the correct signs. The authors of \cite{IP} showed that the solutions of the reduced equations persist for the full system, thereby proving existence of solitary waves in this parameter regime. Moreover, it was shown in \cite{BG} that there exists an infinite number of homoclinic solutions of the reduced equations. The corresponding wave profiles look like several of the solutions found in \cite{IK} glued together.  

Buffoni Groves and Toland \cite{BGT} studied a curve in the parameter plane where a Hamiltonian real 1:1 resonance takes place. When applying the center manifold theorem they obtained a four-dimensional system. This four-dimensional system is, when the bifurcation parameter is set to zero, equivalent to the fourth order ODE
\begin{equation}
\partial_X^4u-2(1+\delta)\partial_X^2u+u-u^2=0.\label{bgt}
\end{equation}
It was shown in \cite{BCT} that this equation has a solution which is unique up to translation for $\delta=0$, and for $\delta<0$ there exists an infinite number of solutions which resemble multiple copies of the first one. That these solutions persist for the full system was shown in \cite{BGT}.

Iooss and Kirchgässner \cite{IK2} also studied the case when the surface tension is small. In particular they considered the case when $\alpha=1-\delta$ and $\beta<\frac{1}{3}$. In this region a Hamiltonian $0^2$ik resonance occurs. The authors showed that there exists a family of solutions which correspond to generalized solitary waves. These are waves which resemble solitary waves, but the wave profile does not tend to zero at infinity. Instead there are oscillatory ripples at infinity. We mention that the case when $\beta=0$ also can be studied using spatial dynamics methods; see \cite{Dias2003}, \cite{GW08}, \cite{Mielke1} and references therein. We also mention \cite{GW07} where the authors apply spatial dynamics methods to study gravity-capillary solitary waves with an arbitrary distribution of vorticity.

\subsection{Previous work on internal waves}
Consider \eqref{e1}--\eqref{e7}. When these equations are nondimensionalized (see \eqref{e15}--\eqref{e21}) four parameters $\alpha,\ \beta,\ \rho$ and $h$ emerge. Here $\alpha=gh_1(1-\rho)/c^2$, $\beta=\sigma/(h_1\rho_2 c^2)$,  $\rho=\rho_1/\rho_2$ and $h=\frac{h_2}{h_1}$. As we shall see later in section \ref{parameterregimes} the bifurcation diagram is qualitatively the same as in the surface wave case. The difference is that the bifurcation curves now depend upon $\rho$ and $h$ (see figure \ref{fig2}).

There are several results concerning steady internal solitary waves. Amick and Turner \cite{AT} proved the existence of solitary waves in the finite depth case, with $\beta=0$. In particular, they found solitary waves of elevation when $\rho<\frac{1}{h^2}$ and solitary waves of depression when $\rho>\frac{1}{h^2}$. The case when $\rho\approx\frac{1}{h^2}$ has also been studied. For example, in \cite{AT2} the authors found small amplitude solitary waves using spatial dynamics techniques and the center manifold theorem.

Kirrmann \cite{KM} considered the case when the coefficient of interfacial tension is nonzero using a spatial dynamics formulation. In particular, he considered the case when $\beta>\frac{\rho+h}{3}$ and $\alpha$ is greater than but close to $\rho+1/h$ (see region III in figure \ref{fig2}). In this parameter-regime he found solitary waves of elevation and depression and again studied the case when $\rho \approx 1/h^2$. This case was also studied in \cite{SC} using different methods.

The above solitary waves appear in a $0^2$ bifurcation at $\alpha=\rho+1/h$. In $\cite{DI}$ the authors considered Hamiltonian-Hopf bifurcation for internal waves in the infinite depth case. The authors formulate the problem as an equation of the form \eqref{cmeq}. One of the main difficulties here is that the real line is contained in the spectrum of the linear operator $K$. As a consequence of this the center manifold theorem cannot be applied. However it is still possible to proceed formally and in doing so they find a critical value $\rho_c$ of $\rho$ such that when $\rho<\rho_c$ they obtain bright solitary wave solutions, and when $\rho>\rho_c$ they obtain dark solitary wave solutions.

\subsection{Outline of this paper}\label{contribution}
In section \ref{hfeu} we obtain a Hamiltonian formulation of the traveling wave problem. This is done by first writing it as a variational problem and identifying an action integral. The Hamiltonian formulation is then obtained via a Legendre transform.
In section \ref{parameterregimes} we study the spectrum of the linear part of the Hamiltonan system and obtain a bifurcation diagram (see figure \ref{fig2}). In particular we identify a Hamiltonian-Hopf bifurcation, a Hamiltonian real 1:1 resonance and a $0^2$ resonance.
A change of variables is found in section \ref{changeofvariables}, such that the boundary conditions in the Hamiltonian system becomes linear. This is needed when applying the center manifold theorem. 
Section \ref{cmtsection} consists of a statement of the version of the center manifold theorem which we are using, and also verification of the hypotheses of the theorem.
In section \ref{hamhopf} we consider the Hamiltonian-Hopf bifurcation. The solutions of the reduced truncated equation satisfy the nonlinear Schrödinger equation:
\begin{equation}
\tilde{A}_{\tilde{x}\tilde{x}}+\sgn(\delta)c_1\tilde{A}+2c_3\tilde{A}\abs{\tilde{A}}^2=0,\label{nlsintro}
\end{equation}
where $\delta$ is a bifurcation parameter. The coefficient $c_1$ is found to be negative, however the sign of $c_3$ depends upon the wavenumber $k$, $\rho$ and $h$. When $c_3>0$  the equation is focusing, and we obtain a family of bright solitary waves, two of which persist when the remainder terms are included, similar to the surface wave case \cite{IK} and the internal wave case for infinite depth \cite{DI}. When $c_3<0$ the equation is defocusing,  and we find dark solitary waves which were also found in the infinite-depth case \cite{DI}.

The Hamiltonian real 1:1 resonance is considered in section \ref{11}. Equation \eqref{bgt} is obtained when assuming that $\abs{\rho-\frac{1}{h^2}}>0$ and we find solutions similar to those in \cite{BGT}. The difference is that the waves are of elevation when $\rho-1/h^2>0$ and of depression when $\rho-1/h^2<0$, whereas in the surface wave case only depression waves are found. We also mention here that equation \eqref{bgt} was derived formally in \cite{SC} for internal waves. When $\rho=1/h^2$, the coefficient of the quadratic term in \eqref{bgt} is equal to $0$. It is therefore necessary to include higher order terms in the Taylor expansion of the reduced Hamiltonian. When the bifurcation parameter $\epsilon$ is set to zero, we obtain the equation:
\begin{equation}
\partial_X^4 u -2(1+\delta)\partial_X^2 u +u-c\kappa u^2-u^3=0,\label{truncatedeq4}
\end{equation}
where $c>0$ is a constant and $\kappa$ is allowed to be either positive or negative. We mention here that equation \eqref{truncatedeq4} is written down in \cite{LD}. However the authors do not consider this equation except when discussing experimental setups (see the discussion at the end of this section). See also \cite{SC} where the authors formally derive another fourth-order ODE for this case. By using the same arguments as in \cite{BCT}, we are able to show that homoclinic solutions of \eqref{truncatedeq4}, when $\delta=\kappa=0$, correspond to a transversal intersection in the zero energy manifold. Using the stable manifold theorem we may then conclude that these solutions persist when $\epsilon>0$, $\delta<0$ and $\kappa$ are sufficiently small. We actually find two distinct solutions here since if $u$ solves \eqref{truncatedeq4} for $\kappa=0$, then $-u$ is a solution as well. Moreover, the spatial dynamics formulation \eqref{cmeq} is Hamiltonian in our case. This allows us to apply the theory from \cite{DV}, which says that for $\delta<0$ sufficiently small there exists a countable family of solutions, which resemble multiple copies of the primary homoclinic orbit. In our case we actually get two such families, one family corresponding to $u$ and the other one to $-u$.

In section \ref{02} the Hamiltonian $0^2$-resonance is considered. Here we recover the results found in \cite{KM}, that is we find solitary waves of elevation when $\rho>1/h^2$, of depression when $\rho<1/h^2$ and of elevation and of depression when $\rho-1/h^2$ is small.%skriva mer här? tag med ekvationerna? 

Finally, we make a brief remark on the physical parameter values needed in order to observe the waves studied in this paper.
In \cite{LD} the authors suggest experimental setups where the waves from the Hamiltonian-Hopf bifurcation and the Hamiltonian real 1:1 resonance could be observed. In these setups the lower fluid is water and the upper is a mixture of silicone oil and 1-2-3-4 tetrahydronaphtalene. 
We also refer \cite{JA} for a collection of images of internal waves in the ocean together with physical data.
\section{Spatial dynamics formulation of the traveling wave problem}\label{hfeu}
We introduce the non-dimensional variables
\begin{equation*}
(x^\prime , y^\prime )=\frac{1}{h_1}(x,y),\quad \eta^\prime(x^\prime)=\frac{\eta(x)}{h_1},\quad \phi_i^\prime(x^\prime,y^\prime)=\frac{\phi_i(x,y)}{h_1c},\ i=1,2.
\end{equation*}
This gives us the following system of equations, where we for notational simplicity drop the $\prime$:
\begin{align}
&\Delta\phi_1=0 \quad \text{for } \eta<y<1, \label{e15}\\
&\Delta\phi_2=0 \quad \text{for } -h<y<\eta, \label{e16}
\end{align}
where $h=\frac{h_2}{h_1}$, and with boundary conditions
\begin{align}
&\phi_{1y}=0  &&\text{on } y=1,\label{e17}\\
&\phi_{2y}=0  && \text{on }y=-h,\label{e18}\\
&-\eta_x=\phi_{1y}-\eta_x\phi_{1x} && \text{on } y=\eta, \label{e19}\\
&-\eta_x=\phi_{2y}-\eta_x\phi_{2x} && \text{on } y=\eta, \label{e20}\\
&-\phi_{2x}+\frac{1}{2}\vert\nabla\phi_2\vert^2-\rho(-\phi_{1x}+\frac{1}{2}\vert\nabla\phi_1\vert^2)+\alpha\eta= \beta\left(\frac{\eta_x}{\sqrt{1+\eta_x^2}}\right)_x && \text{on } y=\eta,\label{e21}
\end{align}
where $\rho=\rho_1/\rho_2$, $\alpha=gh_1(1-\rho)/c^2$ and $\beta=\sigma/(h_1\rho_2 c^2)$.

The domains of $\phi_i$ depend upon the unknown $\eta$. In order to obtain a fixed domain we introduce the following change of variables. Let
\begin{equation*}
z(x,y)=\begin{cases}
\frac{y-1}{\eta(x)-1},& \eta<y<1,\\
 \frac{y+h}{\eta(x)+h},&-h<y<\eta.
\end{cases}
\end{equation*}
Clearly $z(x,y)\in (0,1)$ for all $x\in \mathbb{R}$ and all $y\in (-h,\eta(x))\cup(\eta(x),1)$. Let $\tilde{\phi}_1(x,z)=\phi_1(x,y)$ for $\eta(x)<y<1$ and let $\tilde{\phi}_2(x,z)=\phi_2(x,y)$ for $-h<y<\eta(x)$. The system \eqref{e15}--\eqref{e21} now becomes
\begin{align}
&\tilde{\phi}_{1xx}-\frac{2z\eta_x}{\eta-1}\tilde{\phi}_{1xz}-\frac{z\eta_{xx}}{\eta-1}\tilde{\phi}_{1z}+\frac{2z\eta_x^2}{(\eta-1)^2}\tilde{\phi}_{1z}+\frac{z^2\eta_x^2}{(\eta-1)^2}\tilde{\phi}_{1zz}+\frac{1}{(\eta-1)^2}\tilde{\phi}_{1zz}=0,&& 0<z<1\label{e22}\\
&\tilde{\phi}_{2xx}-\frac{2z\eta_x}{\eta+h}\tilde{\phi}_{2xz}-\frac{z\eta_{xx}}{\eta+h}\tilde{\phi}_{2z}+\frac{2z\eta_x^2}{(\eta+h)^2}\tilde{\phi}_{2z}+\frac{z^2\eta_x^2}{(\eta+h)^2}\tilde{\phi}_{2zz}+\frac{1}{(\eta+h)^2}\tilde{\phi}_{2zz}=0,&& 0<z<1\label{e23}
\end{align}
with boundary conditions
\begin{align}
\tilde{\phi}_{1z}&=0 && \text{on } z=0,\label{e24}\\
\tilde{\phi}_{2z}&=0 && \text{on } z=0,\label{e25}\\
-\eta_x&=\frac{1}{\eta-1}\tilde{\phi}_{1z}-\eta_x\tilde{\phi}_{1x}+\frac{\eta_x^2}{\eta-1}\tilde{\phi}_{1z}&& \text{ on }z=1, \label{e26}\\
-\eta_x&=\frac{1}{\eta+h}\tilde{\phi}_{2z}-\eta_x\tilde{\phi}_{2x}+\frac{\eta_x^2}{\eta+h}\tilde{\phi}_{2z}&& \text{ on }z=1, \label{e27}\\
&-\phi_{2x}+\frac{\eta_x\phi_{2z}}{\eta+h}+\frac{1}{2}\bigg(\phi_{2x}-\frac{\eta_x\phi_{2z}}{\eta+h}\bigg)^2+\frac{\phi_{2z}^2}{2(\eta+h)^2}&&\nonumber\\
&-\rho\Bigg[-\phi_{1x}+\frac{\eta_x\phi_{1z}}{\eta-1}+\frac{1}{2}\bigg(\phi_{1x}-\frac{\eta_x\phi_{1z}}{\eta-1}\bigg)^2+\frac{\phi_{1z}^2}{2(\eta-1)^2}\Bigg]+\alpha\eta&&\nonumber\\
&= \beta\left(\frac{\eta_x}{\sqrt{1+\eta_x^2}}\right)_x&&\text{on } z=1.\label{e28}
\end{align}
From now on we drop the $\sim$ notation. 

We write down the energy $E$ and momentum $P$ associated with this system
\begin{align*}
E&=\frac{h_1^2\rho_1c^2}{2}\int_{\mathbb{R}}\int_0^1\left(\left(\phi_{1x}-\frac{z\eta_x}{\eta-1}\phi_{1z}\right)^2+\frac{\phi_{1z}^2}{(\eta-1)^2}\right)(1-\eta)\ dz\ dx\\
&\quad +\frac{h_1^2\rho_2c^2}{2}\int_{\mathbb{R}}\int_0^1\left(\left(\phi_{2x}-\frac{z\eta_x}{\eta+h}\phi_{2z}\right)^2+\frac{\phi_{2z}^2}{(\eta+h)^2}\right)(\eta+h)\ dz\ dx\\
&\quad +\frac{g(\rho_2-\rho_1)h_1^3}{2}\int_{\mathbb{R}}\eta^2dx+\sigma h_1\int_{\mathbb{R}}\left(\sqrt{1+\eta_x^2}-1\right)\ dx,
\end{align*}
and
\begin{align*}
P=\rho_1h_1^2c\int_\mathbb{R}\int_0^1(\phi_{1x}-\frac{z\eta_x}{\eta-1}\phi_{1z})(1-\eta)\ dz\ dx+\rho_2h_1^2c\int_\mathbb{R}\int_0^1(\phi_{2x}-\frac{z\eta_x}{\eta+h}\phi_{2z})(\eta+h)\ dz\ dx.
\end{align*}
The solutions we are interested in are critical points of the functional $E-cP$, and
\begin{align*}
E-cP&%=h_1^2\rho_2c^2\left[\frac{\rho}{2}\int_{\mathbb{R}}\int_0^1\left((\phi_{1x}-\frac{z\eta_x}{\eta-1}\phi_{1z})^2+\frac{\phi_{1z}^2}{(\eta-1)^2}\right)(1-\eta)dzdx\right.\\
%&+\frac{1}{2}\int_{\mathbb{R}}\int_0^1\left((\phi_{2x}-\frac{z\eta_x}{\eta+h}\phi_{2z})^2+\frac{\phi_{2z}^2}{(\eta+h)^2}\right)(\eta+h)dzdx\\
%&\left.+\frac{g(1-\rho)h_1}{2c^2}\int_\mathbb{R}\eta^2dx+\beta\int_\mathbb{R}(\sqrt{1+\eta_x^2}-1)dx-\rho\int_\mathbb{R}\int_0^1(\phi_{1x}-\frac{z\eta_x}{\eta-1}\phi_{1z})(1-\eta)dzdx\right.\\
%&\left.-\int_\mathbb{R}\int_0^1(\phi_{2x}-\frac{z\eta_x}{\eta+h}\phi_{2z})(\eta+h)dzdx\right]\\
%&=h_1^2\rho_2c^2\left[\frac{\rho}{2}\int_\mathbb{R}\int_0^1((\phi_{1x}-\frac{z\eta_x}{\eta-1}\phi_{1z}-1)^2-1+\frac{\phi_{1z}^2}{(\eta-1)^2})(\eta-1)dzdx\right.\\
%&\left.+\frac{1}{2}\int_\mathbb{R}\int_0^1((\phi_{2x}-\frac{z\eta_x}{\eta+h}\phi_{2z}-1)^2-1+\frac{\phi_{2z}^2}{(\eta+h)^2})(\eta+h)dzdx\right.\\
%&\left.+\int_\mathbb{R}\frac{g(1-\rho)h_1}{2c^2}\eta^2dx+\beta\int_\mathbb{R}(\sqrt{1+\eta_x^2}-1)dx\right]\\
=h_1^2\rho_2c^2\left[\frac{\rho}{2}\int_\mathbb{R}\int_0^1\left(\left(\phi_{1x}-\frac{z\eta_x}{\eta-1}\phi_{1z}-1\right)^2+\frac{\phi_{1z}^2}{(\eta-1)^2}\right)(1-\eta)\ dz\ dx\right.\\
&\left.\quad +\frac{1}{2}\int_\mathbb{R}\int_0^1\left(\left(\phi_{2x}-\frac{z\eta_x}{\eta+h}\phi_{2z}-1\right)^2+\frac{\phi_{2z}^2}{(\eta+h)^2}\right)(\eta+h)\ dz\ dx\right.\\
&\left.\quad +\int_\mathbb{R}\frac{\alpha}{2}\eta^2\ dx+\beta\int_\mathbb{R}\left(\sqrt{1+\eta_x^2}-1\right)\ dx-\frac{\rho}{2}\int_\mathbb{R}(1-\eta)\ dx-\frac{1}{2}\int_\mathbb{R}(\eta+h)\ dx\right].\\
\end{align*}
We regard this as an action integral, where the Lagrangian is determined by
\begin{align*}
L(\eta,\eta_x,\phi_1,\phi_{1x},\phi_2,\phi_{2x})&=\frac{\rho}{2}\int_0^1\left(\left(\phi_{1x}-\frac{z\eta_x}{\eta-1}\phi_{1z}-1\right)^2+\frac{\phi_{1z}^2}{(\eta-1)^2}\right)(1-\eta)\ dz\\
&\quad+\frac{1}{2}\int_0^1\left(\left(\phi_{2x}-\frac{z\eta_x}{\eta+h}\phi_{2z}-1\right)^2+\frac{\phi_{2z}^2}{(\eta+h)^2}\right)(\eta+h)\ dz\\
&\quad+\frac{\alpha}{2}\eta^2+\beta\left(\sqrt{1+\eta_x^2}-1\right)-\frac{\rho}{2}(1-\eta)-\frac{1}{2}(\eta+h).
\end{align*}
A Hamiltonian formulation of \eqref{e22}--\eqref{e28} is obtained via the Legendre transform
\begin{align*}
\psi_1 &=\frac{\delta L}{\delta\phi_{1x}}=\rho(1-\eta)\left(\phi_{1x}-\frac{z\eta_x\phi_{1z}}{\eta-1}-1\right),\\
\psi_2 &=\frac{\delta L}{\delta \phi_{2x}}=(\eta+h)\left(\phi_{2x}-\frac{z\eta_x\phi_{2x}}{\eta+h}-1\right),\\
\omega &=\frac{\delta L}{\delta \eta_x}=\int_0^1 \rho z\phi_{1z}\left(\phi_{1x}-\frac{z\eta_x\phi_{1z}}{\eta-1}-1\right)dz-\int_0^1z\phi_{2z}\left(\phi_{2x}-\frac{z\eta_x\phi_{2z}}{\eta+h}-1\right)dz+\frac{\beta\eta_x}{\sqrt{1+\eta_x^2}}\\
& =-\int_0^1\frac{z\phi_{1z}\psi_1}{\eta-1}\ dz-\int_0^1 \frac{z\phi_{2z}\psi_2}{\eta+h}\ dz+\frac{\beta\eta_x}{\sqrt{1+\eta_x^2}}.
\end{align*}
The Hamiltonian $\widehat{\mathcal{H}}$ is defined by
\begin{align}
\widehat{\mathcal{H}}(\eta,\omega,\phi_1,\psi_1,\phi_2,\psi_2)&=\int_0^1\psi_1\phi_{1x}dz+\int_0^1\psi_2\phi_{2x}dz+\omega\eta_x -L(\eta,\omega,\phi_1,\psi_1,\phi_2,\psi_2)\nonumber\\
&=\int_0^1\frac{1}{2\rho(1-\eta)}\left((\psi_1+(1-\eta)\rho)^2-\rho^2\phi_{1z}^2\right)\ dz\nonumber\\
&\quad+\int_0^1\frac{1}{2(\eta+h)}\left((\psi_2+(\eta+h))^2-\phi_{2z}^2\right)\ dz-\sqrt{\beta^2-\bar{\omega}^2}+\beta\nonumber\\
&\quad-\frac{\alpha}{2}\eta^2,\label{hamiltonian}
\end{align}
where
\begin{equation*} 
\bar{\omega}=\omega+\int_0^1\frac{z\phi_{1z}\psi_1}{\eta-1}\ dz+\int_0^1 \frac{z\phi_{2z}\psi_2}{\eta+h}\ dz.
\end{equation*} 
For $s\geq 0$, define 
\begin{equation*}
X_s=\mathbb{R}\times\mathbb{R}\times H^{s+1}(0,1)\times H^s( 0,1)\times H^{s+1}(0,1)\times H^s( 0,1).
\end{equation*}
Let $\widehat{M}=X_0$, $m\in\widehat{M}$ and let $v=(\eta,\omega,\phi_1,\psi_1,\phi_2,\psi_2)\in T_m\widehat{M}$. On $T_m\widehat{M}\times T_m\widehat{M}$ we define the position independent symplectic form
\begin{align}
\widehat{\Omega}(v,v^*)&=\omega^*\eta-\eta^*\omega +\int_0^1(\psi_1^*\phi_1-\phi_1^*\psi_1)\ dz+\int_0^1(\psi_2^*\phi_2-\phi_2^*\psi_2)\ dz\label{omega1}.
\end{align}
Observe that $(\widehat{M},\widehat{\Omega})$ is a symplectic manifold. The set
\begin{equation*}
\widehat{N}=\{m\in \widehat{M}\text{ }:\text{ } \vert \bar{\omega}\vert<\beta,\text{ } -h<\eta<1\}
\end{equation*}
is a manifold domain of $\widehat{M}$ and $\widehat{\mathcal{H}}\in C^\infty(N,\mathbb{R})$. The triple $(\widehat{M},\widehat{\mathcal{H}},\widehat{\Omega})$ is therefore a Hamiltonian system. The Hamiltonian vector field $v_{\widehat{\mathcal{H}}}$ is defined by
\begin{equation*}
\text{Dom}(v_{\widehat{\mathcal{H}}}):=\{m\in \text{Dom}(\widehat{\mathcal{H}})\ \vert\text{ } \exists (v_{\widehat{\mathcal{H}}})_m\in T_m\widehat{M} \text{ such that } d\widehat{\mathcal{H}}[m](v_m^*)=\widehat{\Omega}((v_{\widehat{\mathcal{H}}})_m,v_m^*) \text{ } \forall v_m^*\in T_m\widehat{M}\},
\end{equation*} 
and Hamilton's equation is given by
\begin{equation}
\dot{\gamma}(x)=(v_{\widehat{\mathcal{H}}})_{\gamma(x)}.\label{heq}
\end{equation}
We find that
\begin{align*}
d\widehat{\mathcal{H}}[m](v_m^*)&=\Bigg(\int_0^1\left[\frac{\rho}{2(1-\eta)^2}(\frac{\psi_1^2}{\rho^2}-\phi_{1z}^2)-\frac{\rho}{2}\right]\ dz+\int_0^1\left[\frac{-1}{2(\eta+h)^2}(\psi_2^2-\phi_{2z}^2)+\frac{1}{2}\right]\ dz\\
&\quad+\frac{\bar{\omega}}{\sqrt{\beta^2-\bar{\omega}^2}}\left[\frac{-1}{(\eta-1)^2}\int_0^1z\phi_{1z}\psi_1dz+\frac{-1}{(\eta+h)^2}\int_0^1z\phi_{2z}\psi_2dz\right]-\alpha\eta\Bigg)\eta^*\\
&\quad+\frac{\bar{\omega}}{\sqrt{\beta^2-\bar{\omega}^2}}\omega^*+\int_0^1\frac{1}{\eta-1}\left(\frac{\bar{\omega}}{\sqrt{\beta^2-\bar{\omega}^2}}z\psi_1+\rho\phi_{1z}\right)\phi_{1z}^*\ dz\\
&\quad+\int_0^1\frac{1}{\eta-1}\left(-\frac{\psi_1}{\rho}+(\eta-1)+\frac{\bar{\omega}}{\sqrt{\beta^2-\bar{\omega}^2}}z\phi_{1z}\right)\psi_1^*\ dz\\
&\quad+\int_0^1\frac{1}{\eta+h}\left(\frac{\bar{\omega}}{\sqrt{\beta^2-\bar{\omega}^2}}z\psi_2-\phi_{2z}\right)\phi_{2z}^*\ dz\\
&\quad+\int_0^1\frac{1}{\eta+h}\left(\psi_2+(\eta+h)+\frac{\bar{\omega}}{\sqrt{\beta^2-\bar{\omega}^2}}z\phi_{2z}\right)\psi_2^*\ dz,
\end{align*}
for $m\in \widehat{N}$ and $v_m^*=(\eta^*,\omega^*,\phi_1^*,\psi_1^*,\phi_2^*,\psi_2^*)$.
By comparing the above expression with \eqref{omega1}, we get that $(v_{\widehat{\mathcal{H}}})_m$ is given by
\begin{align*}
\eta_{\widehat{\mathcal{H}}}&=\frac{\bar{\omega}}{\sqrt{\beta^2-\bar{\omega}^2}},\\
\omega_{\widehat{\mathcal{H}}}&=\int_0^1\left[-\frac{\rho}{2(\eta-1)^2}\left(\frac{\psi_1^2}{\rho^2}-\phi_{1z}^2\right)+\frac{\rho}{2}\right]dz+\int_0^1\left[\frac{1}{2(\eta+h)^2}(\psi_2^2-\phi_{2z}^2)-\frac{1}{2}\right]dz\\
&\quad +\frac{\bar{\omega}}{\sqrt{\beta^2-\bar{\omega}^2}}\left(\int_0^1\frac{z\phi_{1z}\psi_1dz}{(\eta-1)^2}+\int_0^1\frac{z\phi_{2z}\psi_2dz}{(\eta+h)^2}\right)+\alpha\eta,\\
\phi_{1\widehat{\mathcal{H}}}&=\frac{1}{\eta-1}\left(-\frac{\psi_1}{\rho}+(\eta-1)+\frac{\bar{\omega}z\phi_{1z}}{\sqrt{\beta^2-\bar{\omega}^2}}\right),\\
\psi_{1\widehat{\mathcal{H}}}&=\frac{1}{\eta-1}\left(\frac{\bar{\omega}(z\psi_1)_z}{\sqrt{\beta^2-\bar{\omega}^2}}+\rho\phi_{1zz}\right),\\
\phi_{2\widehat{\mathcal{H}}}&=\frac{1}{\eta+h}\left(\psi_2+(\eta+h)+\frac{\bar{\omega}z\phi_{2z}}{\sqrt{\beta^2-\bar{\omega}^2}}\right),\\
\psi_{2\widehat{\mathcal{H}}}&=\frac{1}{\eta+h}\left(\frac{\bar{\omega}(z\psi_2)_z}{\sqrt{\beta^2-\bar{\omega}^2}}-\phi_{2zz}\right).
\end{align*}
The domain $\mathcal{D}(v_{\widehat{\mathcal{H}}})$ is given by the elements $m \in N\cap X_1$ which satisfy
\begin{align}
&\begin{cases}
\rho\phi_{1z}(1)=-\frac{\bar{\omega}\psi_1(1)}{\sqrt{\beta^2-\bar{\omega}^2}},\\
\phi_{1z}(0)=0,
\end{cases}\label{b1}\\
&\begin{cases}
\phi_{2z}(1)=\frac{\bar{\omega}\psi_2(1)}{\sqrt{\beta^2-\bar{\omega}^2}},\\
\phi_{2z}(0)=0.
\end{cases}\label{b2}
\end{align}
Let
\begin{equation*}
 Y_s=\mathbb{R}\times\mathbb{R}\times H^{s+1}(0,1)\times H_0^{s+1}(0,1)\times H^{s+1}(0,1)\times H_0^{s+1}(0,1),
\end{equation*}
where $H_0^{s+1}(0,1)=\{f\in H^{s+1}(0,1)\ \vert \ f(0)=f(1)=0\}, $  and let
\begin{align*}
M&=\{m \in \mathbb{R}^2\times H^1(0,1)^4\ : \ \Gamma_1(0)=\Gamma_2(0)=\int_0^1\tilde{\phi}_1\ dz=\int_0^1\tilde{\phi}_2\ dz= 0\},\\
\tilde{M}&= \{m\in Y_0\ :\ \int_0^1\tilde{\phi}_1\ dz=\int_0^1\tilde{\phi}_2\ dz= 0\},\\
\tilde{N}&=\{m\in \tilde{M}\ : \ \abs{\bar{\omega}}<\beta,\text{ } -h<\eta<1\},
\end{align*}
where $m=(\eta,\omega,\tilde{\phi}_1,\Gamma_1,\tilde{\phi}_2,\Gamma_2)$. The equation \eqref{heq} has an equilibrium point $(0,0,0,-\rho,0,-h)$ which can be moved to the origin by the translation $\psi_1\rightarrow \psi_1+ \rho,\ \psi_2\rightarrow \psi_2+h$. 
Moreover, it is convenient to make the change of variables;
\begin{align*}
\Gamma_1&=\int_0^z (\psi_1+\rho)\ ds,\\
\Gamma_2&=\int_0^z(\psi_2+h)\ ds ,\\
\tilde{\phi}_1&=\phi_{1}-\chi_1,\\
\tilde{\phi}_2&=\phi_{2}-\chi_2,\\
\chi_1&=\int_0^1\phi_1\ dz,\\
\chi_2&=\int_0^1\phi_2\ dz,
\end{align*} 
which maps $(\eta,\omega,\phi_1,\psi_1,\phi_2,\psi_2)\in \widehat{M}$ to $(\eta,\omega,\tilde{\phi}_1,\Gamma_1,\tilde{\phi}_2,\Gamma_2,\chi_1,\chi_2)\in M\times \mathbb{R}^2$.
The symplectic product $\widehat{\Omega}$ transform into
\begin{align*}
\Omega(v,v^*)&=\omega^*\eta-\eta^*\omega +\int_0^1\Gamma_{1z}^*\tilde{\phi}_1-\tilde{\phi}_1^*\Gamma_{1z} \ dz+\int_0^1\Gamma_{2z}^*\tilde{\phi}_2-\tilde{\phi}_2^*\Gamma_{2z} \ dz+\Gamma_1^*(1)\chi_1-\chi_1^*\Gamma_1(1)\\
&\quad +\Gamma_2^*(1)\chi_2-\chi_2^*\Gamma_2(1),
\end{align*}
 and the the Hamiltonian $\widehat{\mathcal{H}}$ into
\begin{align*} \mathcal{H}(\eta,w,\tilde{\phi}_1,\Gamma_1,\tilde{\phi}_2,\Gamma_2)&=\int_0^1\frac{1}{2\rho (1-\eta)}\left[\left(\Gamma_{1z}-\rho\eta\right)^2-\rho^2\tilde{\phi}_{1z}^2\right]dz\\
&\quad+\int_0^1\frac{1}{2(\eta+h)}\left[\left(\Gamma_{2z}+\eta\right)^2-\tilde{\phi}_{2z}^2\right]dz-\sqrt{\beta^2-\bar{\omega}^2}+\beta-\frac{\alpha \eta^2}{2},
\end{align*}
where
\begin{equation*}
\bar{\omega}=\omega+\int_0^1\frac{z\tilde{\phi}_{1z}(\Gamma_{1z}-\rho)dz}{\eta-1}+\int_0^1\frac{z\tilde{\phi}_{2z}(\Gamma_{2z}-h)dz}{\eta+h},
\end{equation*}
and Hamilton's equations on $(M\times \mathbb{R}^2,\Omega)$ are
\begin{align}
\dot{\eta}&=\frac{\bar{\omega}}{\sqrt{\beta^2-\bar{\omega}^2}},\label{heq1}\\
\dot{\omega}&=\int_0^1\frac{-\rho}{2(\eta-1)^2}\left[\frac{(\Gamma_{1z}-\rho)^2}{\rho^2}-\tilde{\phi}_{1z}^2\right]+\frac{\rho}{2}dz+\int_0^1\frac{1}{2(\eta+h)^2}\left[(\Gamma_{2z}-h)^2-\tilde{\phi}_{2z}^2\right]-\frac{1}{2}\ dz\nonumber\\
&\quad+\frac{\bar{\omega}}{\sqrt{\beta^2-\bar{\omega}^2}}\left[\int_0^1\frac{z\tilde{\phi}_{1z}(\Gamma_{1z}-\rho)dz}{(\eta-1)^2}+\int_0^1\frac{z\tilde{\phi}_{2z}(\Gamma_{2z}-h)dz}{(\eta+h)^2}\right]+\alpha\eta,\label{heq2}\\
\dot{\tilde{\phi}}_1&=\frac{1}{\eta-1}\left[-\frac{\Gamma_{1z}}{\rho}+\frac{\Gamma_1(1)}{\rho}+\frac{\bar{\omega}}{\sqrt{\beta^2-\bar{\omega}^2}}(z\tilde{\phi}_{1z}-\tilde{\phi}_1(1))\right],\label{heq3}\\
\dot{\Gamma}_1&=\frac{1}{\eta-1}\left[\frac{\bar{\omega}}{\sqrt{\beta^2-\bar{\omega}^2}}z(\Gamma_{1z}-\rho)+\rho\tilde{\phi}_{1z}\right],\label{heq4}\\
\dot{\tilde{\phi}}_2&=\frac{1}{\eta+h}\left[\Gamma_{2z}-\Gamma_2(1)+\frac{\bar{\omega}}{\sqrt{\beta^2-\bar{\omega}^2}}(z\tilde{\phi}_{2z}-\tilde{\phi}_2(1))\right],\label{heq5}\\
\dot{\Gamma}_2&=\frac{1}{\eta+h}\left[\frac{\bar{\omega}}{\sqrt{\beta^2-\bar{\omega}^2}}z(\Gamma_{2z}-h)-\tilde{\phi}_{2z}\right],\label{heq6}\\
\dot{\chi}_1&=\frac{1}{\eta-1}\left(-\Gamma_1(1)+\eta+\frac{\bar{\omega}\tilde{\phi}_1(1)}{\sqrt{\beta^2-\bar{\omega}^2}}\right),\label{heq7}\\
\dot{\chi}_2&=\frac{1}{\eta+h}\left(\Gamma_2(1)+\eta+\frac{\bar{\omega}\tilde{\phi}_2(1)}{\sqrt{\beta^2-\bar{\omega}^2}}\right),\label{heq8}
\end{align}
with boundary conditions
\begin{align}
&\begin{cases}
\rho\tilde{\phi}_{1z}(1)=-\frac{\bar{\omega}(\Gamma_{1z}(1)-\rho)}{\sqrt{\beta^2-\bar{\omega}^2}},\\
\phi_{1z}(0)=0,\end{cases}\label{b12}\\
&\begin{cases}
\tilde{\phi}_{2z}(1)=\frac{\bar{\omega}(\Gamma_{2z}(1)-h)}{\sqrt{\beta^2-\bar{\omega}^2}},\\
\phi_{2z}(0)=0.\end{cases}\label{b22}
\end{align}
We note that the variables $\chi_1$ and $\chi_2$ are cyclic; their conjugate variables $\Gamma_1(1)$, $\Gamma_2(1)$ are therefore conserved and will be set to $0$. Moreover, it is enough to consider \eqref{heq1}--\eqref{heq6}, since $\chi_1$ and $\chi_2$ can be recovered by quadrature from \eqref{heq7} and \eqref{heq8}. The reason for doing this reduction is to eliminate a zero eigenvalue of algebraic multiplicity $2$. Abusing notation, we denote the reduced Hamiltonian system by $(\tilde{M},\mathcal{H},\Omega)$.
Denote by $v_\mathcal{H}$ the  corresponding Hamiltonian vector field, that is the right hand side of \eqref{heq1}--\eqref{heq6}. Then $\mathcal{D}(v_\mathcal{H})=\{ m\in \tilde{N}\cap{Y_1}\ :\text{ such that } \eqref{b12}\text{ and } \eqref{b22} \text{ hold}\}$.
The equilibrium solution is now given by $u=(0,0,0,0,0,0)$, and the linearization $L$ of $v_\mathcal{H}$ around $u$ is
\begin{equation*}
L\left( \begin{array}{c}
\eta \\
\omega \\
\tilde{\phi}_1\\
\Gamma_1\\
\tilde{\phi}_2\\
\Gamma_2 \end{array} \right)
=\left( \begin{array}{c}
\frac{1}{\beta}\left(\omega+\rho\tilde{\phi}_1(1)-\tilde{\phi}_2(1)\right)\\
\left(\alpha-\rho-\frac{1}{h}\right)\eta\\
\frac{\Gamma_{1z}}{\rho}\\
\frac{\rho z}{\beta}\left(\omega+\rho\tilde{\phi}_1(1)-\tilde{\phi}_2(1)\right)-\rho\tilde{\phi}_{1z}\\
\frac{\Gamma_{2z}}{h}\\
-\frac{z}{\beta}\left(\omega+\rho\tilde{\phi}_1(1)-\tilde{\phi}_2(1)\right)-\frac{\tilde{\phi}_{2z}}{h}\end{array} \right)
\end{equation*}
where $\mathcal{D}(L)$ is the set of elements $m\in \tilde{M}\cap Y_1$ which satisfy
\begin{align}
&\begin{cases}
\tilde{\phi}_{1z}(1)=\frac{1}{\beta}(\omega+\rho\tilde{\phi}_{1}(1)-\tilde{\phi}_{2}(1)) ,\\
\tilde{\phi}_{1z}(0)=0,
\end{cases}\label{lb1}\\
&\begin{cases}
\tilde{\phi}_{2z}(1)=-\frac{h}{\beta}(\omega+\rho\tilde{\phi}_{1}(1)-\tilde{\phi}_{2}(1)) ,\\
\tilde{\phi}_{2z}(0)=0.
\end{cases}\label{lb2}
\end{align}

\subsection{Spectrum of $L$}\label{parameterregimes}%kolla namnet
In this section we investigate how the spectrum of $L$ depends on $\alpha$ and $\beta$. In particular we are interested in imaginary eigenvalues, since the dimension of the center manifold is equal to the number of imaginary eigenvalues counted with multiplicity.
\begin{figure}\centering
\includegraphics{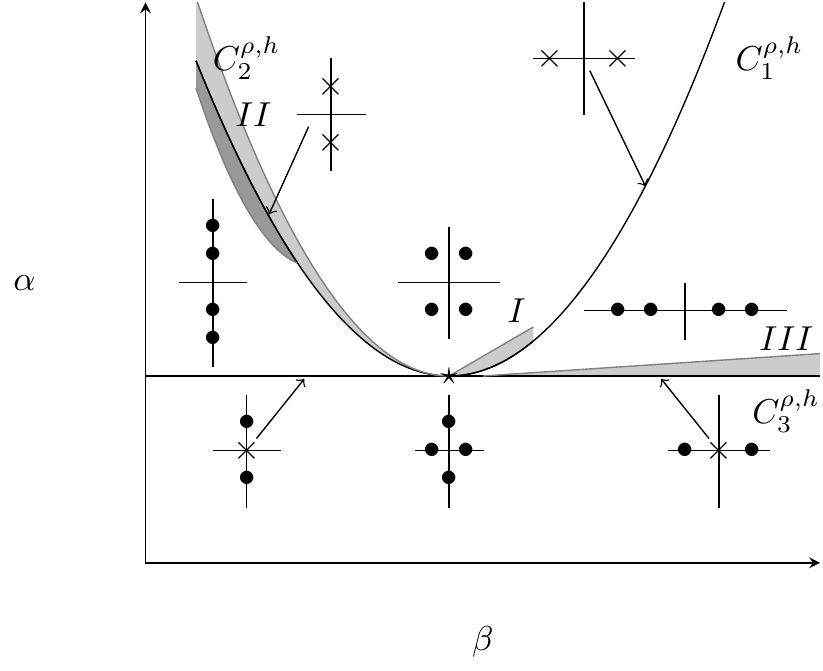}
\caption{Bifurcation curves in the $(\beta,\alpha)$-plane. The $\star$ in the picture indicates the point $(\beta_0,\alpha_0):=(\frac{\rho+h}{3},\rho+\frac{1}{h}$)}
\label{fig2}
\end{figure}
Consider the eigenvalue equation $Lu=\lambda u$, with boundary conditions \eqref{lb1} and \eqref{lb2}. This equation has a solution if and only if 
\begin{equation}
\lambda\left(\frac{\rho}{\tan(\lambda)}+\frac{1}{\tan(h\lambda)}\right)=\alpha-\beta\lambda^2.\label{dispersion}
\end{equation}
When $\lambda=ik$, we obtain the dispersion relation
\begin{equation*}
k\left(\frac{\rho}{\tanh(k)}+\frac{1}{\tanh(hk)}\right)=\alpha+\beta k^2.
\end{equation*}
We will restrict $\lambda$ to some strip $T=\{z\in \mathbb{C}\ : \ \text{Re}(z)\in (-\gamma,\gamma)\}$, where $0<\gamma$ is chosen small enough so that the following is true:
\begin{itemize}
\item When the curve $C_1^{\rho,h}$ is crossed from below, the spectrum of $L$ in $T$ changes from two pairs of real simple eigenvalues to a plus-minus complex-conjugate quartet of complex simple eigenvalues. For points on the curve $C_1^{\rho,h}$ the spectrum is equal to $\{-k_0,k_0\}$ where $\pm k_0$, have algebraic multiplicity $2$.  This change in the spectrum is called a Hamiltonian real 1:1 resonance.
\item When the curve $C_2^{\rho,h}$ is crossed from below, the spectrum of $L$ in $T$ changes from two pairs of simple imaginary eigenvalues to a plus-minus complex-conjugate quartet of complex simple eigenvalues. For points on the curve $C_2^{\rho,h}$ the spectrum is equal to $\{-ik_0,ik_0\}$, where $\pm ik_0$, have algebraic multiplicity $2$. This change in the spectrum is called a Hamiltonian-Hopf bifurcation.
\item When the curve $C_3^{\rho,h}$ is crossed from below, the spectrum of $L$ in $T$ changes from $\{\pm k_1,\pm ik_2\}$, where $\pm k_1$, $\pm ik_2$ are all simple, to two pairs of real simple eigenvalues. For points on the curve $C_3^{\rho,h}$ the spectrum is equal to $\{-k_0,0,k_0\}$, where $\pm k_0$ are simple eigenvalues and $0$ has algebraic multiplicity $2$. This change in the spectrum is called a Hamiltonian real $0^2$ resonance. 
\end{itemize}
The parametrization of the curve $C_1^{\rho,h}$ is given by
\begin{align}
\alpha&=\hat{\alpha}(k_0):=\hat{\beta}(k_0)^2+k_0\left(\frac{\rho}{\tan(k_0)}+\frac{1}{\tan(hk_0)}\right),\label{alphakreal}\\
\beta&=\hat{\beta}(k_0):=\rho\left(\frac{-\sin(k_0)\cos(k_0)+k_0}{2k_0\sin^2(k_0)}\right)+h\left(\frac{-\sin(hk_0)\cos(hk_0)+hk_0}{2hk_0\sin^2(hk_0)}\right)k,\label{betakreal}
\end{align}
and the  parametrization of the curve $C_2^{\rho,h}$ is given by
\begin{align}
\alpha&=\alpha^*(k_0):=-\beta^*(k_0) k_0^2+k_0\left(\frac{\rho}{\tanh(k_0)}+\frac{1}{\tanh(hk_0)}\right)\label{parameterregime2alpha},\\
\beta&=\beta^*(k_0):=\rho\left(\frac{\sinh(k_0)\cosh(k_0)-k_0}{2k_0\sinh^2(k_0)}\right)+h\left(\frac{\sinh(hk_0)\cosh(hk_0)-hk_0}{2hk_0\sinh^2(hk_0)}\right).\label{parameteregime2beta}
\end{align}
\subsection{Change of variables}\label{changeofvariables}
At a later stage, we wish to apply the center manifold theorem. However we cannot apply this theorem directly to the system \eqref{heq1}--\eqref{heq6} since the boundary conditions \eqref{b12}, \eqref{b22} are nonlinear. In order to get linear boundary conditions  we make the change of variables
\begin{align}
&G:\tilde{N}\rightarrow \tilde{M},\nonumber\\
&G(\eta,\omega,\tilde{\phi}_1,\Gamma_1,\tilde{\phi}_2,\Gamma_2)=(\eta,v,\varphi_1,
\Gamma_1,\varphi_2,\Gamma_2)\in \tilde{M},\label{changevar}
\end{align}
where
\begin{align}
&v=\rho\tilde{\phi}_1(1)-\tilde{\phi}_2(1)\label{cvar1},\\
&\varphi_1=\rho\tilde{\phi}_1+W\left(A[\Gamma_1](z)-\frac{\rho}{2}\left(z^2-\frac{1}{3}\right)\right)\label{cvar2},\\
&\varphi_2=\tilde{\phi}_2-W\left(A[\Gamma_2](z)-\frac{h}{2}\left(z^2-\frac{1}{3}\right)\right)\label{cvar3},
\end{align}
and
\begin{align*}
W&=\frac{\bar{\omega}}{\sqrt{\beta^2-\bar{\omega}^2}},\\
 A[f](z)&=\int_0^zsf_s(s)ds-\int_0^1\int_0^zsf_s(s)\ ds \ dz.
\end{align*}
Note that $\varphi_{iz}(1)=\varphi_{iz}(0)=0$ for $i=1,2$ if and only if $\tilde{\phi}_1,\tilde{\phi}_2$ satisfy the boundary conditions \eqref{b12}, \eqref{b22}.
We find that $G$ is invertible in some neighborhood $U_1$ of the origin, with
\begin{equation}
G^{-1}(\eta,v,\varphi_1,\Gamma_1,\varphi_2,\Gamma_2)=\left( \begin{array}{c}
\eta\\
\frac{\beta R}{\sqrt{1+R^2}}-\int_0^1\frac{z\left[\frac{\varphi_{1z}}{\rho}-\frac{Rz(\Gamma_{1z}-\rho)}{\rho}\right](\Gamma_{1z}-\rho)}{\eta-1}\ dz-\int_0^1\frac{z\left[\varphi_{2z}+Rz(\Gamma_{2z}-h)\right](\Gamma_{2z}-h)}{\eta+h}\ dz\\
\frac{\varphi_1}{\rho}-\frac{R}{\rho}\left(A[\Gamma_1](z)-\frac{\rho}{2}(z^2-\frac{1}{3})\right)\\
\Gamma_1\\
\varphi_2+R\left(A[\Gamma_2](z)-\frac{h}{2}(z^2-\frac{1}{3})\right)\\
\Gamma_2 \end{array} \right),\label{ginv}
\end{equation}
where 
\begin{equation*}
R=\frac{\varphi_1(1)-\varphi_2(1)-v}{\left(A[\Gamma_1](1)+A[\Gamma_2](1)-\frac{\rho+h}{3}\right)}.
\end{equation*}
In these new coordinates, the Hamiltonian $H$ is given by
\begin{align}
H(\eta,v,\varphi_1,\Gamma_1,\varphi_2,\Gamma_2)&:=\mathcal{H}(G^{-1}(\eta,v,\varphi_1,\Gamma_1,\varphi_2,\Gamma_2))\nonumber\\
&=\int_0^1\frac{1}{2\rho(1-\eta)}\left((\Gamma_{1z}-\rho\eta)^2-\left[\varphi_{1z}-Rz(\Gamma_{1z}-\rho)\right]^2\right)\ dz\nonumber\\
&\quad +\int_0^1\frac{1}{2(\eta+h)}\left((\Gamma_{2z}+\eta)^2-\left[\varphi_{2z}+Rz(\Gamma_{2z}-h)\right]^2\right)\ dz\nonumber\\
&\quad -\frac{\beta}{\sqrt{1+R^2}}+\beta-\frac{\alpha\eta^2}{2},\label{hamiltonianm}
\end{align}
and equations \eqref{heq1}--\eqref{heq6} become
\begin{align}
\dot{\eta}&=R\label{meq1}, \\
\dot{v}&=\frac{1}{\eta-1}\left(-\Gamma_{1z}(1)+R\left[-R(\Gamma_{1z}(1)-\rho)-\varphi_1(1)+R(A[\Gamma_1](1)-\frac{\rho}{3})\right]\right)\nonumber\\
&\quad -\frac{1}{\eta+h}\left(\Gamma_{2z}(1)+R\left[R(\Gamma_{2z}(1)-h)-\varphi_2(1)-R(A[\Gamma_2](1)-\frac{h}{3})\right]\right),\label{meq2}\\
\dot{\varphi}_1&=\frac{1}{\eta-1}\left(-\Gamma_{1z}+R\left[z\varphi_{1z}-Rz^2(\Gamma_{1z}-\rho)-\varphi_1(1)+R(A[\Gamma_1](1)-\frac{\rho}{3})\right]\right)\nonumber\\
&\quad+\frac{\dot{\bar{\omega}}(1+R^2)^\frac{3}{2}}{\beta}\left(A[\Gamma_1](z)-\frac{\rho}{2}\left(z^2-\frac{1}{3}\right)\right)+\frac{RA[\varphi_{1z}](z)}{\eta-1},\label{meq3}\\
\dot{\Gamma}_1&=\frac{\varphi_{1z}}{\eta-1},\label{meq4}\\
\dot{\varphi}_2&=\frac{1}{\eta+h}\left(\Gamma_{2z}+R\left[z\varphi_{2z}+Rz^2(\Gamma_{2z}-h)-\varphi_2(1)-R(A[\Gamma_2][1]-\frac{h}{3})\right]\right)\nonumber\\
&\quad -\frac{\dot{\bar{\omega}}(1+R^2)^\frac{3}{2}}{\beta}\left(A[\Gamma_2](z)-\frac{h}{2}\left(z^2-\frac{1}{3}\right)\right)+\frac{RA[\varphi_{2z}](z)}{\eta+h},\label{meq5}\\
\dot{\Gamma}_2&=-\frac{\varphi_{2z}}{\eta+h},\label{meq6}
\end{align}
where 
\begin{equation}
\dot{\bar{\omega}}=-\frac{(1+R^2)(\Gamma_{1z}(1)-\rho)^2}{2\rho(\eta-1)^2}+\frac{(1+R^2)(\Gamma_{2z}(1)-h)^2}{2(\eta+h)^2}+\frac{\rho}{2}-\frac{1}{2}+\alpha\eta\label{omegabardot}.
\end{equation}
In conclusion, the Hamiltonian vector field $v_H$ corresponding to the Hamiltonian $H$, is given by the right hand side of equations \eqref{meq1}--\eqref{meq6}. 
The linearisation $K$ of $v_H$  around the trivial solution $u=(0,0,0,0,0,0)$ is given by
\begin{equation*}
K\left( \begin{array}{c}
\eta \\
v \\
\varphi_1\\
\Gamma_1\\
\varphi_2\\
\Gamma_2 \end{array} \right)
=\left( \begin{array}{c}
-\frac{3(\varphi_1(1)-\varphi_2(1)-v)}{\rho+h}\\
\Gamma_{1z}(1)-\frac{1}{h}\Gamma_{2z}(1) \\
\Gamma_{1z}+\left(\Gamma_{1z}(1)-\frac{\Gamma_{2z}(1)}{h}+(\alpha-\rho-\frac{1}{h})\eta\right)\frac{\rho(\frac{1}{3}-z^2)}{2\beta}\\
-\varphi_{1z}\\
\frac{\Gamma_{2z}}{h}-\left(\Gamma_{1z}(1)-\frac{\Gamma_{2z}(1)}{h}+(\alpha-\rho-\frac{1}{h})\eta\right)\frac{h(\frac{1}{3}-z^2)}{2\beta}\\
-\frac{\varphi_{2z}}{h} \end{array} \right),
\end{equation*}
with 
\begin{equation*}
\mathcal{D}(K)=\{(\eta,v,\varphi_1,\Gamma_1,\varphi_1,\Gamma_2)\in \tilde{M} \ :\ \varphi_i\in H^2(0,1),\ \Gamma_i\in H_0^2(0,1),\ \varphi_{iz}(0)=\varphi_{iz}(1)=0\}.
\end{equation*}
The operator $K$ can also be determined by using that 
\begin{equation}
K=dG[0]LdG[0]^{-1}.\label{linop}
\end{equation}
This is well defined since $dG[0]^{-1}:\ \mathcal{D}(K)\rightarrow \mathcal{D}(L)$. Due to this we may work with $L$ instead of $K$ when doing spectral analysis.

Finally we note that $v_H$ anticommutes with the symmetry
\begin{equation}
S:(\eta,v,\varphi_1,\Gamma_1,\varphi_2,\Gamma_2)\rightarrow (\eta,-v,-\varphi_1,\Gamma_1,-\varphi_2,\Gamma_2)\label{reverser},
\end{equation}
that is, the system \eqref{meq1}-\eqref{meq6} is reversible with reverser $S$. Observe also that $H^\mu$ is invariant under $S$.
\section{Center manifold reduction}\label{cmtsection}
In this section we will prove the existence of solitary wave solutions of Hamilton's equations for certain parameter regimes.  In order to do this we show that the reduced Hamiltonian system obtained when applying the center manifold theorem, admits homoclinic solutions. Before stating this theorem we introduce the bifurcation parameter $\mu$ by writing $(\beta,\alpha)=(\tilde{\beta},\tilde{\alpha})+\mu$, where $\mu=(\mu_1,\mu_2)$. The Hamiltonian vector field $v_H$ is then denoted by $v_H^\mu$ and in the expression for $K$ we write $\tilde{\alpha}$ and $\tilde{\beta}$ instead of $\alpha$ and $\beta$, so that $K$ is the linearization of $v_H^0$ around $(0,0,0,0,0,0)$. In the same way we will write $H^\mu$ and $G^\mu$ instead of $H$ and $G$. We will use the following version of the center manifold theorem which is due to \cite{MI} and was used in for example \cite{BG}.
%skriver att lambda är ett öppet område kring 0, ska vara mu0
\begin{theorem}\label{cmt}
Consider the differential equation
\begin{equation}
\dot{u}=Ku+F(u,\mu),\label{fulleqthm}
\end{equation}
where $u$ belongs to a Hilbert space $E$, $\mu\in \mathbb{R}^n$ is a parameter and $K:\mathcal{D}(K)\subset E\rightarrow E$ is a closed linear operator. Suppose that \eqref{fulleqthm} is Hamilton's equations for the Hamiltonian system $(E,\Omega,\mathcal{H})$. Also suppose that $0$ is an equilibrium point of \eqref{fulleqthm} and that
\begin{itemize}
\item[H1] $E$ has two closed, $K$-invariant subspaces $E_1$, $E_2$ such that
\begin{align*}
E&=E_1\oplus E_2,\\
\dot{u}_1&=K_1u_1+F_1(u_1+u_2,\mu),\\
\dot{u}_2&=K_2u_2+F_2(u_1+u_2,\mu),
\end{align*}
where $K_i=K\vert_{\mathcal{D}(K)\cap E_i}:\mathcal{D}(K)\cap E_i\rightarrow E_i$, $i=1,2$ and $F_1=PF$, $F_2=(I-P)F$, where $P$ is the projection of $E$ onto $E_1$.
\item[H2] $E_1$ is finite dimensional and the spectrum of $K_1$ lies on the imaginary axis.
\item[H3] The imaginary axis lies in the resolvent set of $K_2$ and 
\begin{equation*}
\norm{(K_2-iaI)^{-1}}\leq \frac{C}{1+\abs{a}},\quad a\in \mathbb{R}.
\end{equation*}
\item[H4]
There exists $k\in \mathbb{N}$ and neighborhoods $\Lambda\subset \mathbb{R}^n$ and $U\subset \mathcal{D}(K)$ of $0$ such that $F$ is $k+1$ times continuously differentiable on $U\times \Lambda$ and the derivatives of $F$ are bounded and uniformly continuous on $U\times \Lambda$ with
\begin{equation*}
F(0,\mu_0)=0,\quad d_1F[0,\mu_0]=0.
\end{equation*}
\end{itemize}
Under the hypothesis $H1-H4$ there exist neighborhoods $\tilde{\Lambda}\subset \Lambda$ and $\tilde{U}_1\subset U\cap E_1$, $\tilde{U}_2\subset U\cap E_2$ of zero and a reduction function $r:\tilde{U}_1\times \tilde{\Lambda}\rightarrow \tilde{U}_2$ with the following properties. The reduction function $r$ is $k$ times continuously differentiable on $\tilde{U}_1\times \tilde{\Lambda}$ and the derivatives of $r$ are bounded and uniformly continuous on $\tilde{U_1}\times \tilde{\Lambda}$ with
\begin{equation*}
r(0,\mu_0)=0,\quad d_1r[0,\mu_0]=0.
\end{equation*}
The graph
\begin{equation*}
X_C^{\mu}=\{u_1+r(u_1,\mu)\in \tilde{U}_1\times \tilde{U}_2\ :\ u_1\in \tilde{U}_1\},
\end{equation*}
is a Hamiltonian center manifold for \eqref{fulleqthm} with the following properties:
\begin{itemize}
\item Through every point in $X_C^\mu$ there passes a unique solution of \eqref{fulleqthm} that remains on $X_C^\mu$ as long as it remains in $\tilde{U}_1\times \tilde{U}_2$. We say that $X_C^\mu$ is a locally invariant manifold of \eqref{fulleqthm}.
\item Every small bounded solution $u(x)$, $x\in \mathbb{R}$ of \eqref{fulleqthm} that satisfies $u_1(x)\in \tilde{U}_1$, $u_2(x)\in \tilde{U}_2$ lies completely in $X_C^\mu$.
\item Every solution $u_1$ of the reduced equation 
\begin{equation}
\dot{u}_1=K_1u_1+F_1(u_1+r(u_1,\mu),\mu)\label{thmredeq},
\end{equation}
generates a solution
\begin{equation*}
u(x)=u_1(x)+r(u_1(x),\mu)
\end{equation*}
of \eqref{fulleqthm}.
\item $X_C^\mu$ is a symplectic submanifold of $E$, and the flow determined by the Hamiltonian system $(X_C^\mu,\tilde{\Omega},\tilde{H})$, where the tilde denotes restriction to $X_C^\mu$, coincides with the flow on $X_C^\mu$ determined by $(E,\Omega,H)$. The reduced equation \eqref{thmredeq} represents Hamilton's equations for $(X_C^\mu,\tilde{\Omega},\tilde{H})$.
\item 
If \eqref{fulleqthm} is reversible, that is if there exists a linear symmetry $S$ which anticommutes with the right hand side of \eqref{fulleqthm}, then the reduction function $r$ can be chosen so that it commutes with $S$.
\end{itemize}
\end{theorem}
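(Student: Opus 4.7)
The plan is to follow the Lyapunov--Perron approach adapted to ill-posed evolution equations, as developed by Mielke in \cite{MI}. The first step is to cut off the nonlinearity: replace $F(u,\mu)$ by $\chi(u/\delta)F(u,\mu)$ for a smooth bump $\chi$ supported in a small ball, so that the modified nonlinearity is globally defined on $E$, uniformly small in $C^{k+1}$, and identically zero outside a neighborhood of $0$. Any center manifold constructed for the modified equation then coincides with an invariant manifold of the original equation inside that neighborhood, which yields the local invariance statement. For the Hamiltonian and reversible aspects, the cut-off must be chosen so that the modified vector field remains Hamiltonian and anticommutes with $S$; in practice one cuts off $\mathcal{H}$ (not $F$ directly) by $S$-invariant quantities so these structures are preserved.

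The core of the argument is a contraction mapping in an exponentially weighted space $C^0_\eta(\mathbb{R},E_2)$ with norm $\sup_x e^{-\eta|x|}\norm{u_2(x)}$ for a small weight $\eta>0$. Hypothesis H3, combined with standard Fourier-multiplier arguments on the imaginary axis, shows that $\partial_x - K_2$ is boundedly invertible on this space; denote its inverse by $\mathcal{K}_2$. For each bounded $u_1:\mathbb{R}\to E_1$ and parameter $\mu$ one then seeks $u_2$ as the unique fixed point of $u_2\mapsto \mathcal{K}_2 F_2(u_1+u_2,\mu)$, which is a contraction uniformly in $(u_1,\mu)$ by smallness of the cut-off nonlinearity. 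This produces $u_2 = \Phi(u_1,\mu)$, and setting $r(u_1(0),\mu) := \Phi(u_1,\mu)(0)$ defines the reduction function. Smoothness up to order $k$ is obtained by the parameter-dependent contraction principle together with a bootstrap in the scale of weighted spaces $C^\ell_\eta$.

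Once the graph $X_C^\mu$ is in place, the structural assertions follow almost formally. Reversibility: since $F_2$ anticommutes with $S$, the map $(u_1,u_2)\mapsto (Su_1,Su_2)$ sends fixed points of the integral equation to fixed points, so by uniqueness $r(Su_1,\mu) = S\, r(u_1,\mu)$. The Hamiltonian statement rests on verifying that $X_C^\mu$ is a symplectic submanifold: at $(u_1,\mu)=(0,0)$ the tangent space is $E_1$, and $\Omega|_{E_1}$ is nondegenerate because $E_1$ is the sum of generalized eigenspaces of $K$ at purely imaginary eigenvalues, on which the Hamiltonian structure forces $\Omega$ to be symplectic; nondegeneracy persists under small perturbations, hence $\tilde{\Omega}$ is symplectic on $X_C^\mu$ near $0$. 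The identity $\iota^* v_{\mathcal{H}} = v_{\tilde{\mathcal{H}}}$ for the inclusion $\iota: X_C^\mu \hookrightarrow E$ is then the standard fact that a symplectic submanifold invariant under a Hamiltonian flow carries the restricted Hamiltonian's dynamics.

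I expect the main obstacle to be the resolvent estimate H3 and the passage to a bounded inverse of $\partial_x - K_2$ on the weighted space, with constants small enough to absorb the nonlinearity: the bound in H3 is only $C/(1+|a|)$, which is strong enough to make $(ia-K_2)^{-1}$ an $L^2$-Fourier multiplier on $\mathbb{R}$ with values in $E_2$ but does not immediately give boundedness in the weighted $C^0$-setting without an additional commutator or interpolation argument. A secondary technical point is organizing the cut-off to preserve the Hamiltonian and reversible structures simultaneously, since the natural truncation of $F$ by a bump in $u$ destroys the Hamiltonian property; this is overcome by truncating the Hamiltonian $\mathcal{H}$ itself through $S$-invariant, $\Omega$-compatible functionals, so that the modified vector field is both Hamiltonian and reversible.
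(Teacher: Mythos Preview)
The paper does not prove this theorem at all: it is stated as a quotation of a known result, attributed to Mielke \cite{MI} and to its use in \cite{BG}, and is then applied as a black box. There is therefore no ``paper's own proof'' to compare your attempt to; the paper's entire treatment of this statement is the sentence ``We will use the following version of the center manifold theorem which is due to \cite{MI} and was used in for example \cite{BG}.''

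That said, your sketch is a faithful outline of the Lyapunov--Perron construction that Mielke actually carries out, and most of the pieces are correctly identified: cut-off of the nonlinearity, fixed-point equation for $u_2$ in an exponentially weighted space using the inverse of $\partial_x-K_2$ supplied by H3, smoothness by the parameter-dependent contraction principle with a scale of weights, and the reversibility property by uniqueness of the fixed point. One point deserves a correction. You propose to preserve the Hamiltonian structure by truncating $\mathcal{H}$ rather than $F$; this is not how Mielke proceeds and is in fact problematic, since a truncated Hamiltonian generally no longer has $0$ as an equilibrium and the resulting vector field need not fit the contraction framework. The standard route---and the one implicit in \cite{MI} and \cite{BG}---is to cut off $F$ directly (destroying the global Hamiltonian structure), construct the center manifold, and then argue \emph{a posteriori} that the local center manifold is symplectic because its tangent space at the origin is $E_1$ and $\Omega|_{E_1}$ is nondegenerate; the reduced flow is then automatically Hamiltonian for the restricted $(\tilde{\Omega},\tilde{H})$ since $X_C^\mu$ is locally invariant under the genuine (untruncated) Hamiltonian flow. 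Your last paragraph already contains this argument, so you should simply drop the suggestion of truncating $\mathcal{H}$.
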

In our case we have $E=\tilde{M}$ and \eqref{fulleqthm} correspond to
\begin{equation}
\dot{v}=Kv+F(v,\mu)\label{meqfull},
\end{equation}
where $F(v,\mu)=v_H^\mu(v)-Kv$. The bifurcation parameter $\mu$ belongs to some neighborhood $\Lambda$ of $0$ in $\mathbb{R}^2$, which will be specified later on. We will use the same arguments as in \cite{BGT} when showing that hypothesis $H1-H4$ are satisfied.
Note that $H3$ is satisfied, by the following theorem:
\begin{theorem}
There exist constants $C,a_0>0$ such that 
\begin{equation}
\norm{(L-iaI)^{-1}}\leq \frac{C}{\abs{a}}\label{resineq}
\end{equation}
for all $\abs{a}> a_0$
\end{theorem}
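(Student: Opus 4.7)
The plan is to solve the resolvent equation $(L - iaI)u = f$ explicitly for $|a|$ large and read off the bound. Writing $u = (\eta, \omega, \tilde\phi_1, \Gamma_1, \tilde\phi_2, \Gamma_2)$ and correspondingly $f = (f_1,\dots,f_6)$, the first two components of $L$ are algebraic in $z$ and yield
\begin{align*}
\tfrac{1}{\beta}\bigl(\omega + \rho\tilde\phi_1(1) - \tilde\phi_2(1)\bigr) - ia\eta = f_1, \qquad (\alpha - \rho - \tfrac{1}{h})\eta - ia\omega = f_2,
\end{align*}
which determine $\eta$ and $\omega$ in closed form in terms of $v := \omega + \rho\tilde\phi_1(1) - \tilde\phi_2(1)$ and the data, with bounds $|\eta|, |\omega| \lesssim |a|^{-1}(|v| + \|f\|_{Y_0})$.

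For the $z$-dependent components I eliminate $\Gamma_1$ and $\Gamma_2$ using $\Gamma_{1z} = \rho(ia\tilde\phi_1 + f_3)$ and $\Gamma_{2z} = h(ia\tilde\phi_2 + f_5)$ from rows three and five, then substitute into the $z$-derivatives of rows four and six. This produces two decoupled boundary value problems of modified-Helmholtz type,
\begin{align*}
\tilde\phi_{1zz} - a^2 \tilde\phi_1 &= \tfrac{v}{\beta} - iaf_3 - \tfrac{f_{4z}}{\rho}, & \tilde\phi_{1z}(0) &= 0, \quad \tilde\phi_{1z}(1) = \tfrac{v}{\beta}, \\
\tilde\phi_{2zz} - (ah)^2 \tilde\phi_2 &= -\tfrac{vh}{\beta} - iah^2 f_5 - hf_{6z}, & \tilde\phi_{2z}(0) &= 0, \quad \tilde\phi_{2z}(1) = -\tfrac{hv}{\beta}.
\end{align*}
The fundamental solutions are $\cosh(a_i z)$, $\sinh(a_i z)$, and the associated Green's function for the inhomogeneous Neumann problem has kernel of order $|a|^{-1} e^{-|a||z - s|}$. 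The $O(|a|)$-size forcings $iaf_3$ and $iah^2 f_5$ are absorbed by a single integration by parts against this exponentially decaying kernel, using that $f_3, f_5 \in H^1(0,1)$; the remaining interior forcings and the boundary lift are controlled by standard Green's function estimates. Together this yields $\|\tilde\phi_i\|_{H^1} \lesssim |a|^{-1}(|v| + \|f\|_{Y_0})$ and, via the relations $\Gamma_{iz} = \rho_i(ia\tilde\phi_i + f_{i+2})$ with $\rho_1 = \rho$, $\rho_2 = h$, the same bound for $\|\Gamma_i\|_{H^1}$.

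Finally, I close the loop on $v$ by substituting the explicit representations for $\tilde\phi_1(1)$ and $\tilde\phi_2(1)$ into the definition of $v$, which produces a scalar equation $\mathcal{D}(a)\,v = R(a, f)$ with $|R(a,f)| \lesssim \|f\|_{Y_0}/|a|$. After normalization by $-\beta a^2$, the coefficient $\mathcal{D}(a)$ is $(-\beta a^2)^{-1}$ times the imaginary-axis dispersion function
\[
a\Bigl(\tfrac{\rho}{\tanh a} + \tfrac{1}{\tanh(ah)}\Bigr) - \alpha - \beta a^2
\]
obtained from \eqref{dispersion} at $\lambda = ia$; since $\coth a = \sgn(a) + O(e^{-2|a|})$, this gives $\mathcal{D}(a) = 1 + O(|a|^{-1})$ and hence $|\mathcal{D}(a)| \ge c > 0$ for $|a| \ge a_0$ with some $a_0 > 0$. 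This is precisely the quantitative form of the fact, recorded in Section~\ref{parameterregimes}, that $L$ has no imaginary eigenvalues of large modulus. Consequently $|v| \lesssim \|f\|_{Y_0}/|a|$, which combined with the earlier estimates gives $\|u\|_{Y_0} \le C \|f\|_{Y_0}/|a|$. The main technical obstacle is the integration-by-parts bookkeeping needed to absorb the $O(|a|)$ forcing terms $iaf_3$ and $iah^2 f_5$ cleanly without losing derivatives; once this is handled, the rest of the proof reduces to elementary estimates on hyperbolic functions.
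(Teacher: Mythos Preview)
Your approach is correct and is essentially the one the paper has in mind: the paper omits the proof entirely, pointing to Proposition~3.2 of \cite{BGT}, and that proposition is proved precisely by solving $(L-iaI)u=f$ explicitly, reducing the $z$-dependent components to second-order two-point boundary value problems with hyperbolic fundamental solutions, and recognising the dispersion function as the determinant of the remaining scalar equation. Your reduction to the two decoupled problems for $\tilde\phi_1,\tilde\phi_2$, the identification of $\mathcal D(a)$ with the imaginary-axis dispersion relation \eqref{dispersion}, and the large-$|a|$ asymptotics $\mathcal D(a)=1+O(|a|^{-1})$ are all exactly the right ingredients.

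Two small remarks on the bookkeeping. First, the bound $|R(a,f)|\lesssim\|f\|/|a|$ is a little optimistic: the contributions of $\Phi(1),\Psi(1)$ coming from $f_4,f_6\in H^1_0$ are in general only $O(|a|^{-1/2}\|f\|)$, so one obtains $|v|\lesssim|a|^{-1/2}\|f\|$ rather than $|a|^{-1}\|f\|$. This is harmless, since the $v$-dependent homogeneous part $A\cosh(az)$ of $\tilde\phi_i$ satisfies $\|A\cosh(a\cdot)\|_{H^1}\sim|v|/|a|^{1/2}$, which then gives exactly the required $|a|^{-1}$. Second, when you pass from $\|\tilde\phi_i\|_{H^1}$ back to $\|\Gamma_i\|_{H^1}$ via $\Gamma_{1z}=\rho(ia\tilde\phi_1+f_3)$, the naive estimate would lose a factor of $|a|$; the point is that your integration-by-parts representation of the particular solution contains the piece $\tfrac{i}{a}f_3(z)$, and $ia\cdot\tfrac{i}{a}f_3=-f_3$ cancels the explicit $f_3$ in $\Gamma_{1z}$. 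Making this cancellation explicit is really what the ``integration-by-parts bookkeeping'' amounts to, and once it is written out the remaining terms are controlled by the Green's function bounds you state.
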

The proof of this theorem is very similar to the proof of proposition 3.2 in \cite{BGT} and will therefore be omitted. It follows from \eqref{linop} that \eqref{resineq} holds for $K$ as well. In particular we get from \eqref{resineq} that the resolvent set of $K$ is nonempty. This implies that $K$ is closed.

Let $ia$ be an element in the resolvent set of $K$. It follows from the Kondrachov embedding theorem that 
\begin{equation*}
K:\mathcal{D}(K)\rightarrow \tilde{M},
\end{equation*}
has compact resolvent. This implies that the spectrum of $K$ consists of an at most countable number of isolated eigenvalues with finite multiplicity. From this we conclude that there exists $\xi>0$ such that
\begin{equation*}
\text{Spec}(K)=\{\lambda\in\text{Spec}(K)\ : \ \abs{\text{Re}(\lambda)}>\xi\}\cup \{\lambda \in \text{Spec}(K)\ :\ \text{Re}(\lambda)=0\},
\end{equation*}
that is, the part of the spectrum which lies on the imaginary axis is separated from the rest of the spectrum. This allows us to define the spectral projection $P$, corresponding to the imaginary part of the spectrum:
\begin{equation}
P=-\frac{1}{2\pi i}\int_\gamma (K-\lambda)^{-1}d\lambda, \label{specprojection}
\end{equation}
where $\gamma$ is a curve surrounding the imaginary part of the spectrum and which lies in the resolvent set.

%The Hamiltonian $H$ from \eqref{hamiltonianm} takes the form
%\begin{align*}
%H^\mu(\eta,v,\varphi_1,\Gamma_1,\varphi_2,\Gamma_2)&=\int_0^1\frac{1}{2\rho(1-\eta)}\left((\Gamma_{1z}-\rho\eta)^2-\left[\varphi_{1z}-Rz(\Gamma_{1z}-\rho)\right]^2\right)dz\nonumber\\
%&\quad +\int_0^1\frac{1}{2(\eta+h)}\left[(\Gamma_{2z}+\eta)^2-\left[\varphi_{2z}+Rz(\Gamma_{2z}-h)\right]^2\right)dz\nonumber\\
%&\quad -\frac{\tilde{\beta}+\mu_1}{\sqrt{1+R^2}}+\tilde{\beta}+\mu_1-\frac{(\tilde{\alpha}+\mu_2)\eta^2}{2}. 
%\end{align*}
We check hypotheses $H1,H2$ and $H4$ of theorem \ref{cmt}. Recall from section \ref{changeofvariables} that there exists a neighborhood $U_1\subset\tilde{N}$ such that $G^\mu$ is invertible in that neighborhood, for a fixed value of $\mu$. For a fixed $\tilde{\beta}$ we may assume that $\abs{\bar{\omega}}<\tilde{\beta}/2$ for all $u\in U_1$. Choose a neighborhood $\Lambda\subset \mathbb{R}^2$ of $(0,0)$ such that $\abs{\mu_1}<\tilde{\beta}/4$, $ \tilde{\alpha}+\mu_2>0$, for all $\mu\in \Lambda$. It follows that $G^\mu$ is invertible in the neighborhood $U_1\times \Lambda$ of the origin. Let $U_2=G(U_1)$. In $H4$  we let $U=U_2\cap\mathcal{D}(K)$  It is easy to see that $F$ is smooth in $U\times \Lambda$.
We also define a symplectic product $\Omega_2^\mu$ in $U_2$ by
\begin{equation*}
(\Omega_2^\mu(u,u^*))_m:=\Omega(dG^\mu[(G^\mu)^{-1}(m)]^{-1}(u),dG^\mu[(G^\mu)^{-1}(m)]^{-1}(u^*)).
\end{equation*}
Let $E_1=P(\tilde{M})$ and let $E_2=(I-P)\tilde{M}$, where $P:\tilde{M}\rightarrow \tilde{M}$ is the spectral projection corresponding to the imaginary part of the spectrum, defined as in \eqref{specprojection}. It follows from theorem 6.17 chapter III in \cite{KA}, together with the fact that the imaginary part of the spectrum of $K$ consists of a finite number of eigenvalues with finite multiplicity (see section \ref{parameterregimes}), that $H1$ and $H2$ are satisfied.
By the center manifold theorem, there exists neighborhoods $\tilde{U}_1\subset U \cap E_1$, $\tilde{\Lambda}\subset\Lambda$ of zero and a reduction function $r:\tilde{U}_1\times \tilde{\Lambda}\rightarrow\tilde{U}_2$ such that $r(0,0)=d_1r[0,0]=0$ and
\begin{equation*}
X_C^\mu=\{u_1+r(u_1,\mu)\ : \ u_1\in \tilde{U}_1\},
\end{equation*}
is a center manifold for \eqref{meqfull} . We then have the Hamiltonian system $(X_C^\mu,\tilde{\Omega}_2^\mu, \tilde{H}^\mu)$, where 
\begin{align*}
(\tilde{\Omega}_2^\mu)_m(u_1,u_1^*)&=(\Omega_2^\mu)_{m+r(m,\mu)}(u_1+d_1r[m,\mu](u_1),u_1^*+d_1r[m,\mu](u_1^*),\\
\tilde{H}^\mu(u_1)&=H^\mu(u_1+r(u_1,\mu)).
\end{align*}
The Hamiltonian $\tilde{H}^\mu$ is called the reduced Hamiltonian. Recall that $v_H^\mu$ is reversible with reverser $S$ given in \eqref{reverser}. By theorem \ref{cmt} the reduction function $r$ can be chosen so that it commutes with $S$. If we combine this with the fact that $H^\mu$ is invariant under $S$, we get that
\begin{align*}
 \tilde{H}^\mu(Su_1)&=H^\mu(Su_1+r(Su_1,\mu))\\
&=H^\mu(S(u_1+r(u_1,\mu)))\\
&=H^\mu(u_1+r(u_1,\mu)\\
&=\tilde{H}^\mu(u_1),
\end{align*}
that is, $\tilde{H}^\mu$ is invariant under $S$.

From Darboux's theorem (see  theorem 4 of \cite{BG}) there exists a near identity change of variables
\begin{equation*}
\hat{u}_1=u_1+\theta(u_1,\mu),
\end{equation*}
such that $\tilde{\Omega}_2^\mu$ is transformed into $\Psi$, where 
\begin{equation*}
\Psi(u_1,u_1^*)=(\Omega_2^0)_0(u_1,u_1^*)
\end{equation*}
The coordinate map is then given by $\hat{u}_1+\hat{r}(u_1,\mu)$, where $\hat{r}: \tilde{U}_1\times\tilde{\Lambda} \rightarrow \tilde{U}_1\times \tilde{U}_2$ and $\hat{r}(0,0)=d_1\hat{r}(0,0)=0$.
For simplicity we drop the $\hat{}$ notation. It follows from the definition of the Hamiltonian vector field, that $2H_2^0[u,u^*]=\Psi(Ku,u^*)$. We conclude that $K$ is skew-symmetric with respect to $\Psi$, since
\begin{equation} 
\Psi(Ku,u^*)=2H_2^0[u,u^*]=2H_2^0[u^*,u]=\Psi(Ku^*,u)=-\Psi(u,Ku^*)\label{skewsymmetry}
\end{equation}
\subsection{Hamiltonian-Hopf bifurcation}\label{hamhopf}
Recall from section \ref{parameterregimes} that a Hamiltonian-Hopf bifurcation occurs when crossing the curve $C_2^{\rho,h}$. Let $(\beta,\alpha)=(\beta^*(k),\alpha^*(k))+\mu$, where $\alpha^*(k)$ and $\beta^*(k)$  are given in \eqref{parameterregime2alpha}, \eqref{parameteregime2beta} and $\mu=(0,\delta)$, that is we consider region $II$ in figure \ref{fig2}. We will show that there exists $k,\rho$ and $h$ such that the reduced Hamiltonian system has homoclinic solutions for sufficiently small $\delta$.
We saw in section \ref{parameterregimes} that when $\alpha=\alpha^*(k)$ and $\beta=\beta^*(k)$, the spectrum of $L$ in $T$ is equal to $\{\pm ik\}$, where $\pm ik$ are eigenvalues of algebraic multiplicity 2. Generalized eigenvectors corresponding to the eigenvalue $ik$ are given by
\begin{equation}
e_1=\left( \begin{array}{c}
1\\
\frac{1}{ik}(\alpha-\rho-\frac{1}{h})\\
i\left(\frac{\cosh(kz)}{\sinh(k)}-\frac{1}{k}\right)\\
\rho\left(-\frac{\sinh(kz)}{\sinh(k)}+z\right)\\
i\left(-\frac{\cosh(hkz)}{\sinh(hk)}+\frac{1}{hk}\right)\\
\frac{\sinh(hkz)}{\sinh(hk)}-z
\end{array}\right)
,\quad 
e_2=\left(\begin{array}{c}
0\\
\frac{1}{k^2}(\alpha-\rho-\frac{1}{h})\\
\frac{\cosh(kz)}{\sinh(k)}\left(\scalebox{0.8}{$\tanh(kz)z$}-\frac{1}{\tanh(k)}\right)+\frac{1}{k^2}\\
-\frac{i\rho\cosh(kz)}{\sinh(k)}\left(\frac{\tanh(kz)}{\tanh(k)}-z\right)\\
-\frac{h\cosh(hkz)}{\sinh(hk)}\left(\scalebox{0.8}{$\tanh(hkz)z$}-\frac{1}{\tanh(hk)}\right)-\frac{1}{hk^2}\\
\frac{ih\cosh(hkz)}{\sinh(hk)}\left(\frac{\tanh(hkz)}{\tanh(hk)}-z\right)\end{array}\right)\label{geneigenik}
\end{equation}
and satisfy $(L-ik)e_1=0$ and $(L-ik)e_2=e_1$. Since $L$ is real, it follows that $\bar{e}_1$ and $\bar{e}_2$ are generalized eigenvectors corresponding to the eigenvalue $-ik$.
Note that
\begin{equation*}
\Omega(e_1,\bar{e}_2)=\gamma_1:=-\beta^*(k)+\frac{1}{k}\left(\frac{\rho}{\tanh(k)}+\frac{1}{\tanh(hk)}\right)-\frac{\rho k}{\tanh(k)\sinh^2(k)}-\frac{h^2k}{\tanh(hk)\sinh^2(hk)},
\end{equation*}  
and $\gamma_1>0$.
Moreover, $\Omega(e_1,\bar{e}_1)=0$ and $\Omega(e_2,\bar{e}_2)=-i\gamma_2$, where $\gamma_2>0$. Let $e_2'=e_2+\frac{i\gamma_2}{2\gamma_1}e_1$. Then 
\begin{align*}
\Omega(e_1,e_2')&=0,\ \Omega(e_1,\bar{e}_2')=\gamma_1,\ \Omega(\bar{e}_1,e_2')=\gamma_1,\  \Omega(e_2',\bar{e}_2')=0.
\end{align*}
We therefore let
\begin{equation}
W_1=\frac{e_1}{\sqrt{\gamma_1}},\quad W_2=\frac{e_2'}{\sqrt{\gamma_1}}.\label{symplecticik}
\end{equation}
The vectors $W_i$ satisfies
\begin{equation*}
\Omega(W_1,\bar{W}_2)=\Omega(\bar{W}_1,W_2)=1,
\end{equation*}
and every other combination is equal to $0$.
In conclusion, $\{W_1,\bar{W}_2,\bar{W}_1,W_2\}$ is a symplectic basis for the generalized eigenspace corresponding to $\pm ik$.

Let $f_i=dG(0)(W_i)$, where $W_i$ are given in \eqref{symplecticik}. Note that
\begin{align*}
\Psi(f_i,f_j)&=(\Omega_2^0)_0(f_i,f_j)\\
&=\Omega(dG^\mu[(G^\mu)^{-1}(0)]^{-1}(f_i),dG^\mu[(G^\mu)^{-1}(0)]^{-1}(f_j))\\
&=\Omega(dG^\mu(0)^{-1}(f_i),dG^\mu(0)^{-1}(f_j))\\
&=\Omega(W_i,W_j).
\end{align*}
It follows that $\{f_1,f_2,\bar{f}_1,\bar{f}_2\}$ is a symplectic basis of $(E_1,\Psi)$, since $\{W_1,W_2,\bar{W}_1,\bar{W}_2\}$ is a symplectic basis of the generalized eigenspace corresponding to $\pm ik$, with respect to $\Omega$.
We write
\begin{equation*}
u_1=(A,B,\bar{A},\bar{B})=Af_1+Bf_2+\bar{A}\bar{f}_1+\bar{B}\bar{f}_2,
\end{equation*}  
and see that
\begin{equation*}
\Psi((A,B,\bar{A},\bar{B}),(A^*,B^*,\bar{A}^*,\bar{B}^*))=A\bar{B}^*-A^*\bar{B}+\bar{A}B^*-\bar{A}^*B.
\end{equation*}
Moreover, the action of the reverser $S$ is given in coordinates by
\begin{equation*}
S\:\ (A,B,\bar{A},\bar{B})\rightarrow (\bar{A},-\bar{B},A,-B).
\end{equation*}
As in \cite{BG}, we use normal form theory for Hamiltonian systems (see \cite{MH}) to conclude that for every $n_0\geq 2$, there exists a near identity, analytic, symplectic change of variables, such that
\begin{equation}
\tilde{H}^\mu(A,B)=ik(A\bar{B}-\bar{A}B)+\abs{B}^2+H_{NF}(\abs{A}^2,i(A\bar{B}-\bar{A}B),\delta)+\mathcal{O}(\abs{u_1}^2\abs{(\delta,u_1)}^{n_0}),\label{normalformreduction}
\end{equation}
where $H_{NF}$ is a real polynomial of degree $n_0+1$ which satisfies
\begin{equation*}H_{NF}(\abs{A}^2,i(A\bar{B}-\bar{A}B),\delta)=\mathcal{O}(\abs{u_1}\abs{(\delta,u_1)}).
\end{equation*}
Take $n_0=3$. Then
\begin{align}
H_{NF}(\abs{A}^2,i(A\bar{B}-\bar{A}B),\delta)&=c_1\delta\abs{A}^2+ic_2\delta(A\bar{B}-\bar{A}B)+c_3\abs{A}^4+ic_4\abs{A}^2(A\bar{B}-\bar{A}B)\\
&\quad -c_5(A\bar{B}-\bar{A}B)^2+\delta^2c_6\abs{A}^2+ic_7\delta^2(A\bar{B}-\bar{A}B).\label{normalform}
\end{align}
We have that
\begin{align}
\dot{u}&=Ku+F(u,\mu)\label{hamfull},\\
\dot{u}_1&=Ku_1+N(u_1,\mu),\label{hamreduced}
\end{align}
where $u=u_1+r(u_1,\mu)\in X_C^\mu$ and where $N$ is the nonlinear part of the reduced Hamiltonian vector field. The identity \eqref{hamfull} is just Hamilton's equation and \eqref{hamreduced} is obtained when we in \eqref{hamfull} project onto $E_1$. This means in particular that $N(u_1,\mu)=PF(u_1+r(u_1,\mu),\mu)$. By combining \eqref{hamreduced} with \eqref{hamfull} we get that
\begin{align}
&\dot{u}=Ku+F(u,\mu)\Leftrightarrow\nonumber\\
&\dot{u}_1+d_1r[u_1,\mu](\dot{u}_1)=K(u_1+r(u_1,\mu))+F(u_1+r(u_1,\mu),\mu)\Leftrightarrow\nonumber\\
&K(r(u_1,\mu))-d_1r[u_1,\mu](Ku_1)=-F(u_1+r(u_1,\mu),\mu)+N(u_1,\mu)+d_1r[u_1,\mu](N(u_1,\mu))\label{hamfullmod}.
\end{align}
Since $F(0,\mu)=0$, for all $\mu\in\tilde{\Lambda}$, we may assume as in lemma 4.4 of \cite{BGT} that $r(0,\mu)=0$ for all $\mu\in \tilde{\Lambda}$. The Taylor expansions of $r$ and $N$ are then given by
\begin{align*}
N(u_1,\mu)&=\sum_{h+i+j+k+l=2}^3N_{ijkl}^h\delta^hA^iB^j\bar{A}^k\bar{B}^l+\mathcal{O}(\abs{u_1}\abs{(\delta,u_1)}^3),\\
r(u_1,\mu)&=\sum_{h+i+j+k+l=2}^3r_{ijkl}^h\delta^hA^iB^j\bar{A}^k\bar{B}^l+\mathcal{O}(\abs{u_1}\abs{(\delta,u_1)}^3).
\end{align*}
By inserting \eqref{normalform} into \eqref{normalformreduction} we obtain the reduced Hamiltonian equations.
\begin{align}
A_x&=ikA+B+iA(c_2\delta+c_4\abs{A}^2)+2c_5A(-A\bar{B}+\bar{A}B)\nonumber\\
&\quad +ic_7\delta^2A+\mathcal{O}(\abs{u_1}\abs{(\delta,u_1}^{3})\label{redhameq1},\\
B_x&=ikB+iB(c_2\delta+c_4\abs{A}^2+c_7\delta^2)-A(\delta c_1+2c_3\abs{A}^2+c_4iA\bar{B}+c_6\delta^2)\nonumber\\
&\quad +2c_5B(-A\bar{B}+\bar{A}B)+\mathcal{O}(\abs{u_1}\abs{(\delta,u_1)}^{3})\label{redhameq2},
\end{align}
and $N(u_1,\mu)$ is identified as the nonlinear part of \eqref{redhameq1} and \eqref{redhameq2}. 
The following theorem shows that the signs of $c_1$ and $c_3$ determine which type of solitary wave solutions we have for \eqref{redhameq1}, \eqref{redhameq2}.
\begin{theorem}\label{caseshamhopf}
Suppose that $c_1<0$.
\begin{enumerate}
\item\cite{IP} $c_3>0$: For each sufficiently small positive value of $\delta$ the system \eqref{redhameq1}, \eqref{redhameq2} has two distinct symmetric homoclinic solutions.
\item \cite{BG} $c_3>0$: For each sufficiently small, positive value of $\delta$ the system \eqref{redhameq1}, \eqref{redhameq2} has two one-parameter families of geometrically distinct homoclinic solutions which generically resemble multiple copies of one of the homoclinic solutions in (1).
\item \cite{IP} $c_3<0$: For each sufficiently small, negative value of $\delta$  the system \eqref{redhameq1}, \eqref{redhameq2} has a one-parameter family of pairs of reversible homoclinic orbits to periodic orbits.
\end{enumerate}
The homoclinic solutions in $1$ and $2$, correspond to envelope solitary waves of amplitude $\mathcal{O}(-c_1\delta^{\frac{1}{2}})$ which decay to a horizontal flow as $x\rightarrow \pm \infty$. These waves are sometimes called bright solitary waves. The solutions found in $3$ are called dark solitary waves. The amplitude of such a wave around $0$ is small in comparison with the amplitude as $\abs{x}\rightarrow \infty$ (see figure \ref{darkbright} for typical wave profiles). 
\end{theorem}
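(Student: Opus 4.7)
The plan is to recognize that the reduced system \eqref{redhameq1}, \eqref{redhameq2} is in the standard Birkhoff normal form for a reversible Hamiltonian-Hopf bifurcation, so that the persistence theorems of Iooss-Pérouème \cite{IP} for cases (1) and (3), and the multi-pulse theorem of Buffoni-Groves \cite{BG} for case (2), apply directly. The main work is to exhibit the homoclinic orbits at the level of the truncated normal form and then invoke the cited results for persistence.

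First, I would introduce the rescaling $A(x)=\sqrt{|c_1\delta/c_3|}\,\tilde A(\tilde x)\,e^{ikx}$, $B=A_x-ikA$, with $\tilde x=\sqrt{|c_1\delta|}\,x$. Substituting into the normal form truncated at the polynomial \eqref{normalform}, the dominant balance reduces \eqref{redhameq1}, \eqref{redhameq2} to the cubic nonlinear Schrödinger equation \eqref{nlsintro}:
\begin{equation*}
\tilde A_{\tilde x\tilde x}+\sgn(\delta)c_1\tilde A+2c_3\tilde A|\tilde A|^2=0.
\end{equation*}
With $c_1<0$, in case (1) the choice $\delta>0$ gives the focusing NLS, whose symmetric bright soliton $\tilde A(\tilde x)=\sech(\tilde x)/\sqrt{c_3}$ exists; combining with the phase invariance $\tilde A\mapsto e^{i\theta}\tilde A$ of the truncated system and the action of the reverser $S$, one sees that there are precisely two orbits in the fixed-point set of $S$. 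In case (3), $\delta<0$ gives the defocusing NLS with the dark-soliton heteroclinic $\tilde A(\tilde x)=\tanh(\tilde x)/\sqrt{-c_3}$, which in the original variables corresponds to a homoclinic connection to a small periodic orbit.

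Next, I would establish persistence under the full reduced vector field. Write the truncated homoclinic as a hyperbolic fixed point (respectively a saddle-center periodic orbit) of a reversible Hamiltonian flow. By choosing $n_0$ large enough in \eqref{normalformreduction}, the remainder $\mathcal O(|u_1|\,|(\delta,u_1)|^{n_0})$ becomes of higher order in the rescaled variables than any term retained. One then sets up a fixed-point argument in an exponentially weighted space for functions in the intersection $\mathrm{Fix}(S)\cap\tilde H^\mu{}^{-1}(0)$: this is exactly the framework of Iooss-Pérouème, and it yields cases (1) and (3). For case (2), the existence of the primary homoclinic, together with the nondegenerate transversality of the unstable and center-stable manifolds along it within the zero-energy level (which follows from the simplicity of the NLS bright soliton), furnishes the hypotheses of Buffoni-Groves \cite{BG}; their Lin-type gluing construction then produces the announced two countable families of multi-bump orbits.

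The principal obstacle is the persistence step for case (3): because the asymptotic state is a periodic orbit rather than a fixed point, one must control the dynamics on the center manifold at infinity and show that the transverse intersection of the center-stable and center-unstable manifolds along the reversibility section is preserved under the full perturbation. This is handled exactly as in \cite{IP} via a combination of reversibility, conservation of $\tilde H^\mu$, and the nonresonance of $ik$ with the bifurcating wavenumber, so the argument does not need to be repeated here; the qualitative shape of the resulting waves is illustrated in figure \ref{darkbright}.
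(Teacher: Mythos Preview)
Your proposal is correct and follows the same line as the paper. In fact the paper does not give an independent proof of this theorem at all: the citations \cite{IP} and \cite{BG} are attached to the individual items in the statement, and the only argument supplied is the rescaling to the cubic NLS \eqref{nlsintro}, after which the persistence results are simply invoked. Your sketch reproduces exactly this structure---normal form recognized as standard reversible Hamiltonian--Hopf, rescale to NLS, exhibit the sech (bright) and tanh (dark) profiles, then cite Iooss--P\'erou\`eme for cases (1) and (3) and Buffoni--Groves for case (2)---only with more detail than the paper itself provides.
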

The terms bright and dark solitary waves come from the nonlinear Schrödinger equation. The connection between the system \eqref{redhameq1}--\eqref{redhameq2} and the nonlinear Schrödinger equation can be seen by scaling:
\begin{equation*}
\tilde{x}=\abs{\delta}^\frac{1}{2}x,\quad\abs{\delta}^\frac{1}{2}\tilde{A}(\tilde{x})=A(x)\exp(-ikx),\quad \abs{\delta} \tilde{B}(\tilde{x})=B(x)\exp(-ikx).
\end{equation*}
When terms of $\mathcal{O}(\abs{\delta}^\frac{1}{2})$ are neglected, we find that $\tilde{A}$ satisfies
\begin{equation*}
\tilde{A}_{\tilde{x}\tilde{x}}+\sgn(\delta)c_1\tilde{A}+2c_3\tilde{A}\abs{\tilde{A}}^2=0.
\end{equation*}
The case $c_3>0$ corresponds to the focusing NLS equation and the case $c_3<0$ corresponds to the defocusing NLS equation.

First we find the coefficient $c_1$. We do this by considering terms of order $\delta A$ in \eqref{hamfullmod}:
\begin{align*}
Kr_{1000}^1- ikr_{1000}^1&=-F_{1}^1(f_1)+ic_2f_1-c_1f_2\Leftrightarrow\\
&(K-ikI)r_{1000}^1=-F_{1}^1(f_1)+i\delta c_2f_1-c_1f_2\Rightarrow\\
&\Psi((K-ikI)r_{1000}^1,\bar{f}_{1})=-\Psi(F_{1}^1(f_1),\bar{f}_1)+c_1
\end{align*}
where $F_{1}^1(u_1)=d_2d_1F[0,0](u_1)$. Using that $K$ is skew symmetric with respect to $\Psi$ and the fact that $(K+ikI)\bar{f}_{1}=0$, we find that $\Psi((K-ikI)r_{1000}^1,\bar{f}_{1})=-\Psi(r_{1000}^1,(K+ikI)\bar{f}_{1})=0$. It follows that
\begin{equation}
c_1=\Psi(F_{1}^1(f_1),\bar{f}_1).\label{coeffc1}
\end{equation}
Next we find an expression for $c_3$. In order to do this we consider terms of order $\abs{u_1}^2$ and $\abs{u_1}^3$ in \eqref{hamfullmod}. This gives us the following two equations:
\begin{align}
\abs{u_1}^2:\quad &Kr_{2}^0(u_1)-d_1r_{2}^0[u_1](Ku_1)=-F_{2}^0(u_1,u_1)+N_{2}^0(u_1)\label{secondorderham},\\
\abs{u_1}^3:\quad &Kr_{3}^0(u_1)-d_1r_{3}^0[u_1](Ku_1)=-F_{3}^0(u_1,u_1,u_1)-2F_{2}^0(u_1,r_{2}^0(u_1))+d_1r_2^0[u_1](N_{2}^0(u_1))\nonumber\\
&+N_{3}^0(u_0),\label{thirdorderham}
\end{align}
where
\begin{align*}
r_{n}^m(u_1,\delta)&=\sum_{i+j+k+l=n}r_{ijkl}^m\delta^mA^iB^j\bar{A}^k\bar{B}^l,\\
N_{n}^m(u_1,\delta)&=\sum_{i+j+k+l=n}N_{ijkl}^m\delta^mA^iB^j\bar{A}^k\bar{B}^l,\\
F_{2}^0(u,u^*)&=\frac{d_1^2F[0,0](u,u^*)}{2},\\
F_{3}^0(u,u^*,\hat{u})&=\frac{d_1^3F[0,0](u,u^*,\hat{u})}{6},\\
\end{align*}
Equating coefficients of the terms order $A^2$ and $\abs{A}^2$ in \eqref{secondorderham}, we obtain
\begin{align}
\abs{A}^2&:\quad Kr_{1010}^0=-2F_{2}^0(f_1,\bar{f}_1)\label{r01010},\\
A^2&:\quad (K-2ik)r_{2000}^0=-F_{2}^0(f_1,\bar{f}_1)\label{r02000}.
\end{align}
The vectors $r_{1010}^0$ and $r_{2000}^0$ can be calculated by solving the equations \eqref{r01010} and \eqref{r02000}.
In the same way, considering $A\abs{A}^2$ in \eqref{thirdorderham}, we obtain
\begin{align}
A\abs{A}^2:\quad (K-ik)r_{2010}^0&=-3F_{3}^0(f_1,f_1,\bar{f_1})-2F_{2}^0(f_1,r_{1010}^0)-2F_{2}^0(\bar{f}_1,r_{2000}^0)+c_4f_1-2c_3f_2\Rightarrow\nonumber\\
c_3&=\Psi(F_{2}^0(f_1,r_{1010}^0),\bar{f}_1)+\Psi(F_{2}^0(\bar{f}_1,r_{2000}^0),\bar{f}_1)+\frac{3}{2}\Psi(F_{3}^0(f_1,f_1,\bar{f}_1),\bar{f_1})\label{coeffc3},
\end{align}
where we again used that $\Psi((K-ik)r_{2010}^0,\bar{f}_1)=0$, due to skew symmetry. The coefficients $c_1$ and $c_3$ are given in appendix \ref{c1c3}.
\begin{figure}
\begin{center}
\includegraphics[scale=0.35]{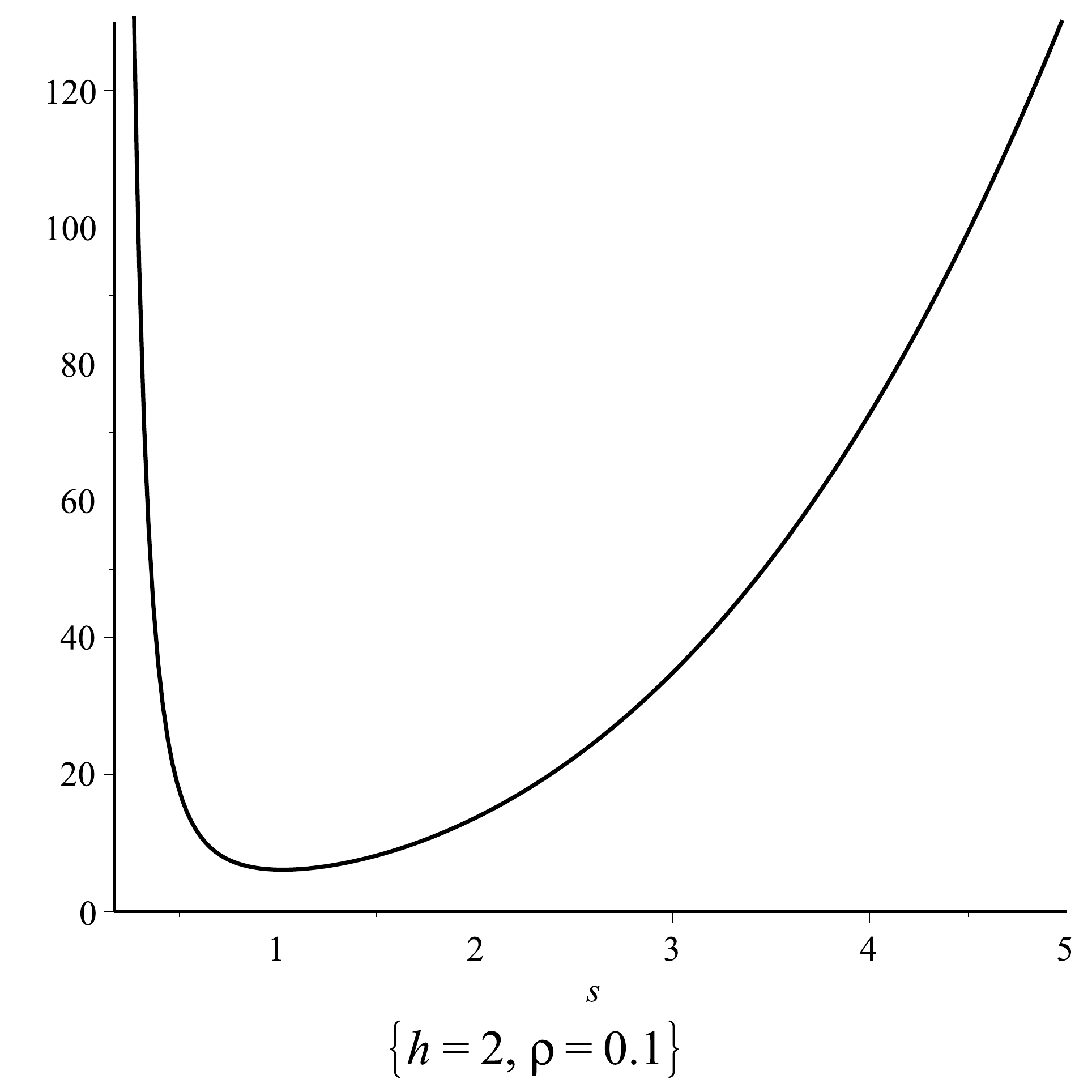}
\includegraphics[scale=0.35]{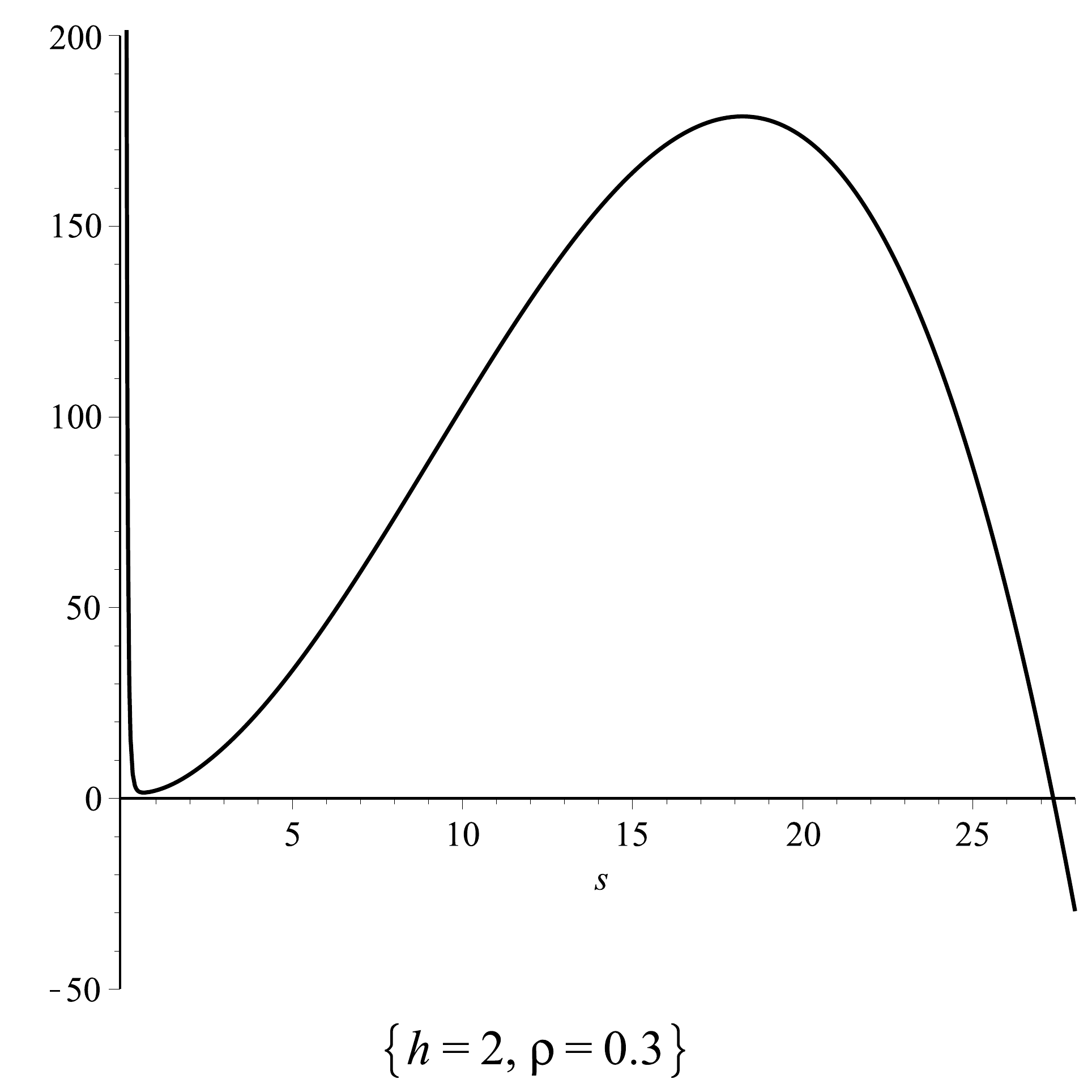}
\includegraphics[scale=0.35]{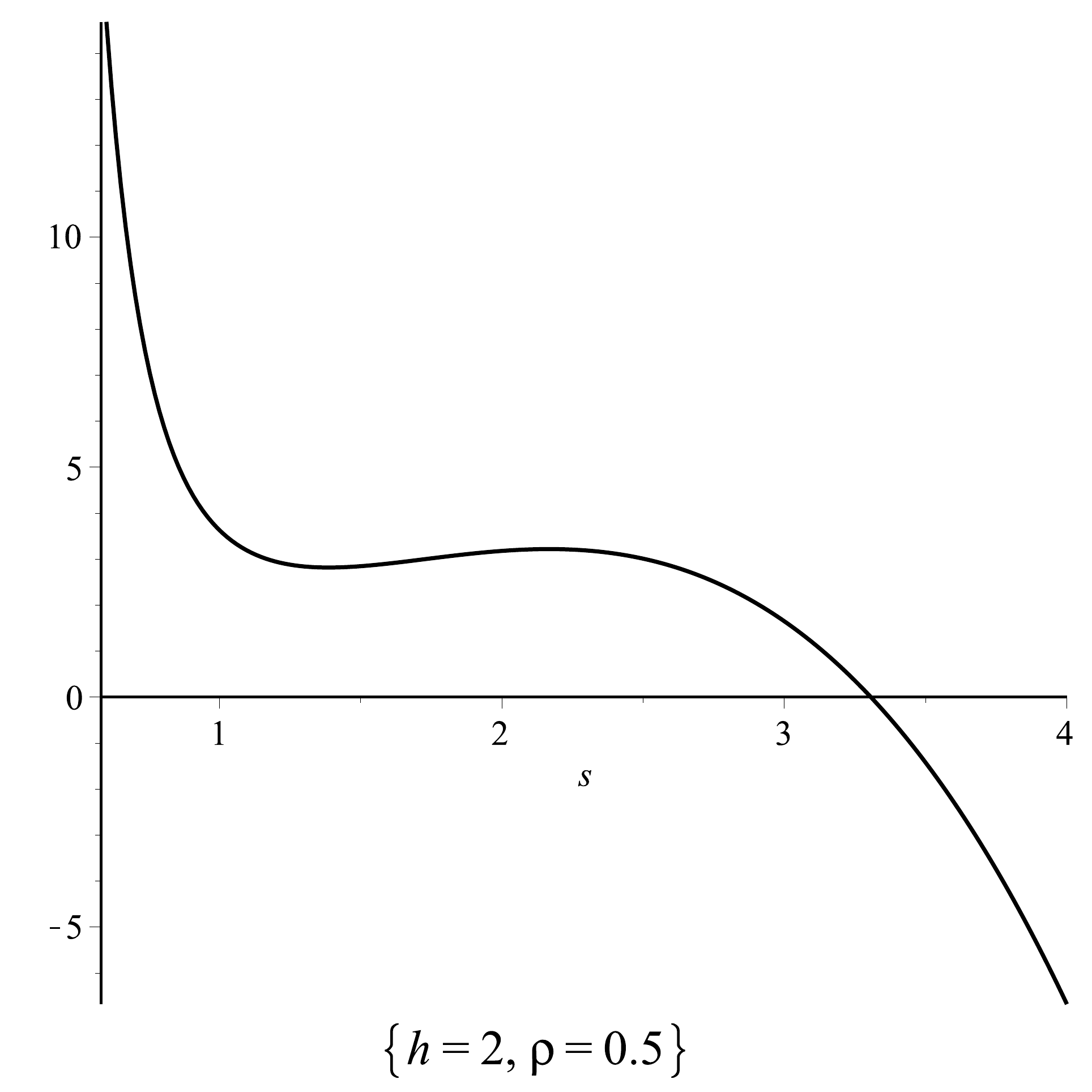}
\includegraphics[scale=0.35]{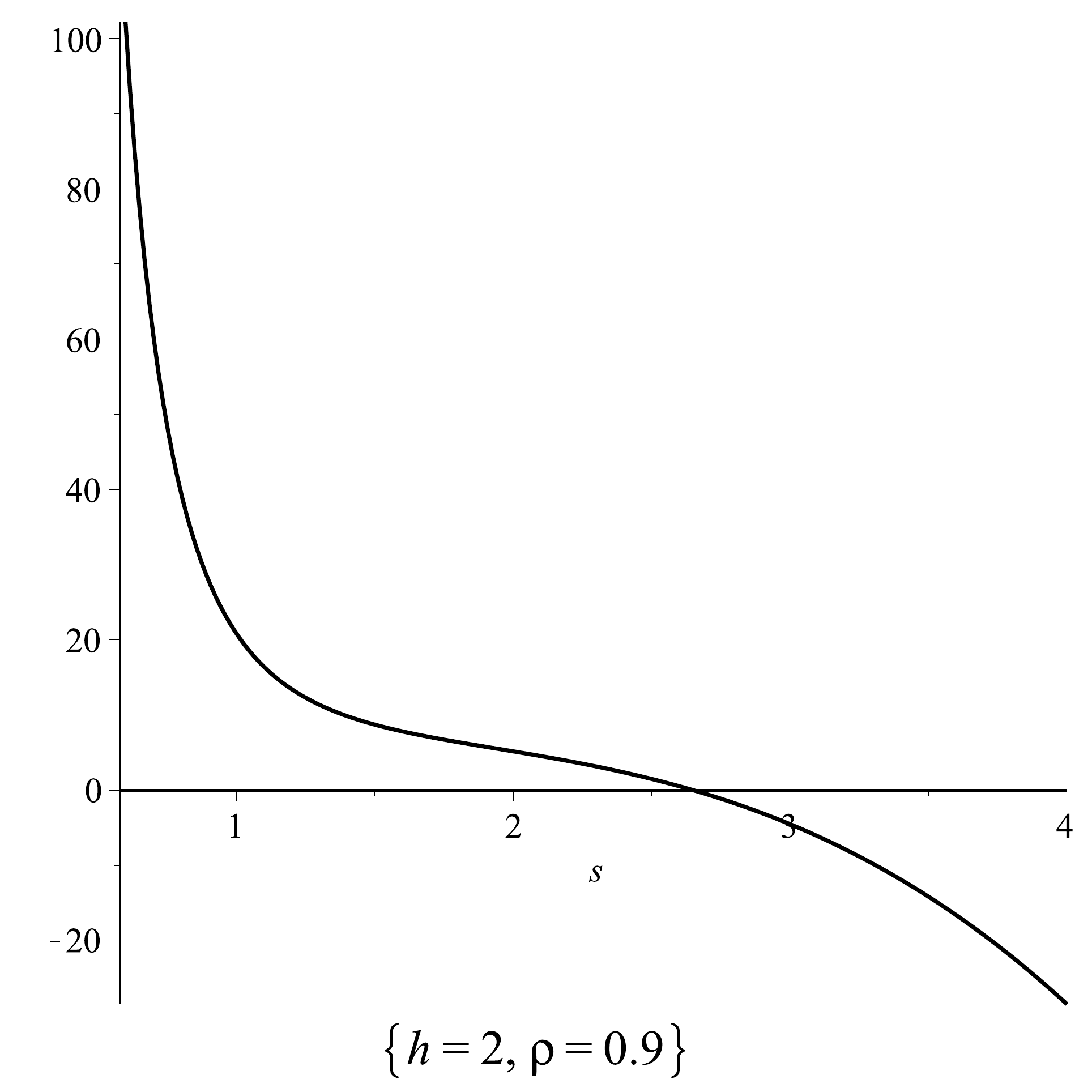}
\caption{Plots of $2\gamma_1c_3(k)$, as functions of $k$ for increasing values of $\rho$ and where $h$ is kept fixed. It seems to be the case that for small $\rho$, $c_3(k)>0$ for all $k>0$. This agrees with the results found when Laurent expanding $c_3(k)$ (see appendix \ref{c1c3}). However, when $\rho$ is increased it seems that $c(k)=0$ exactly once.}\label{rhoplot}
\end{center}
\end{figure}
\begin{figure}
\begin{center}
\includegraphics[scale=0.35]{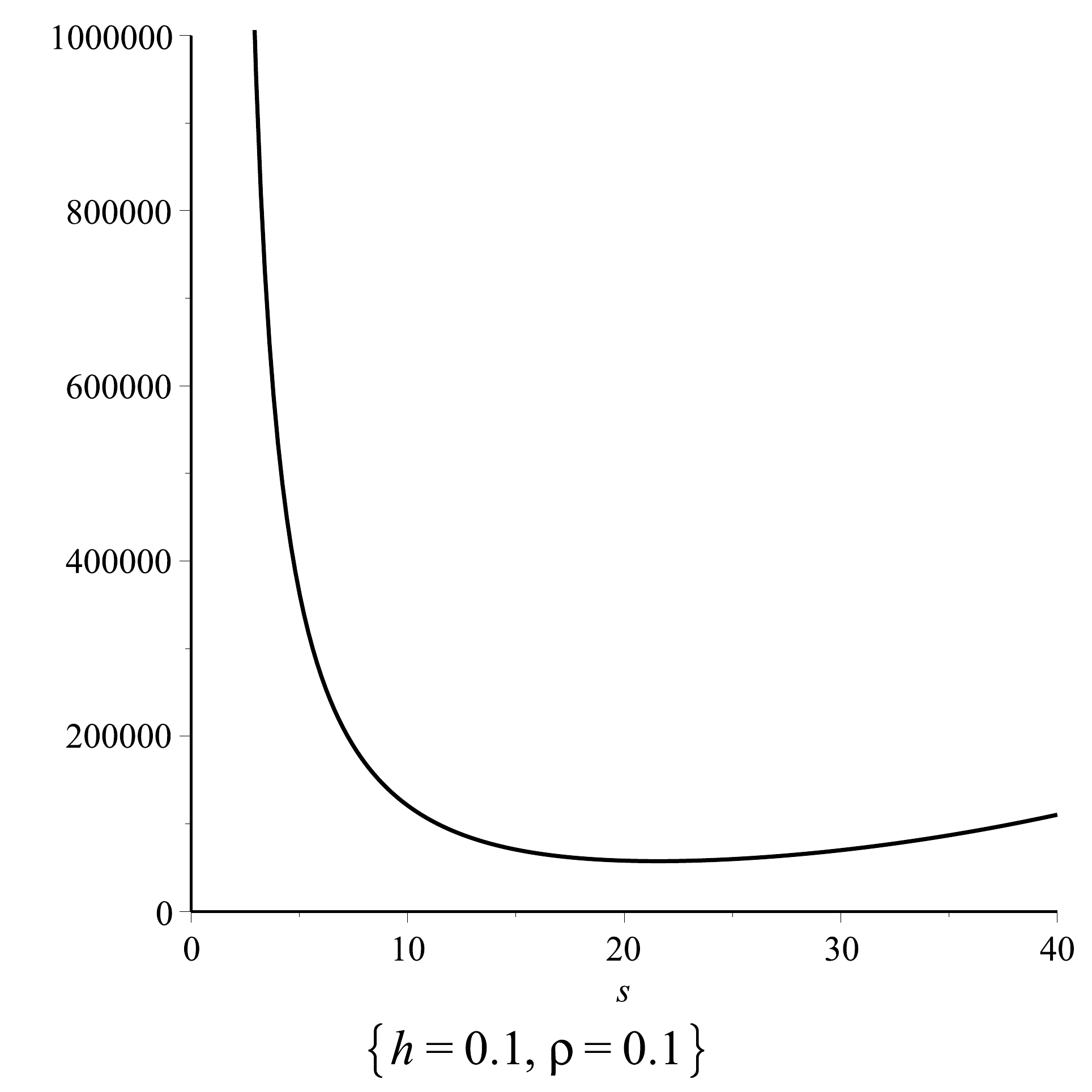}
\includegraphics[scale=0.35]{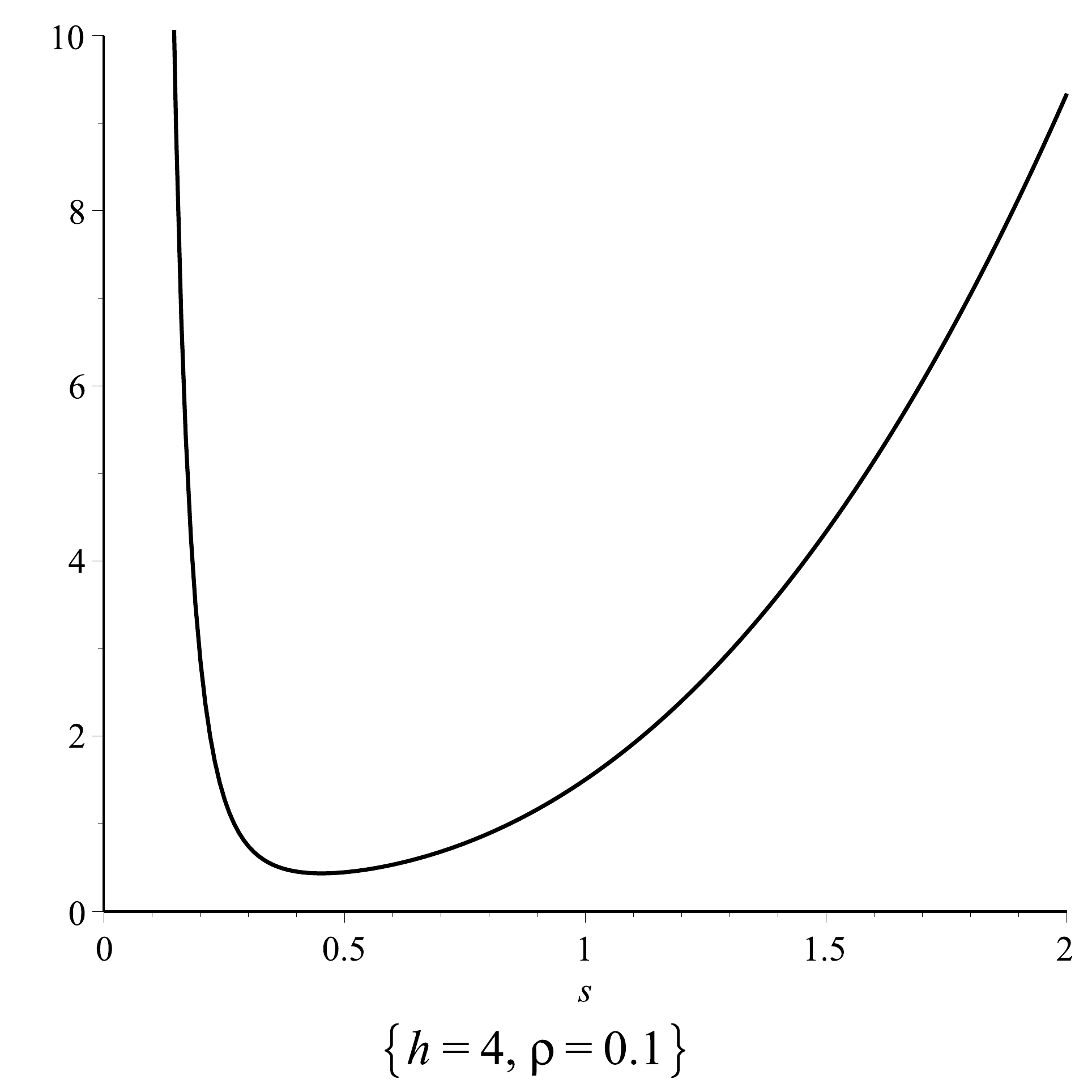}
\includegraphics[scale=0.35]{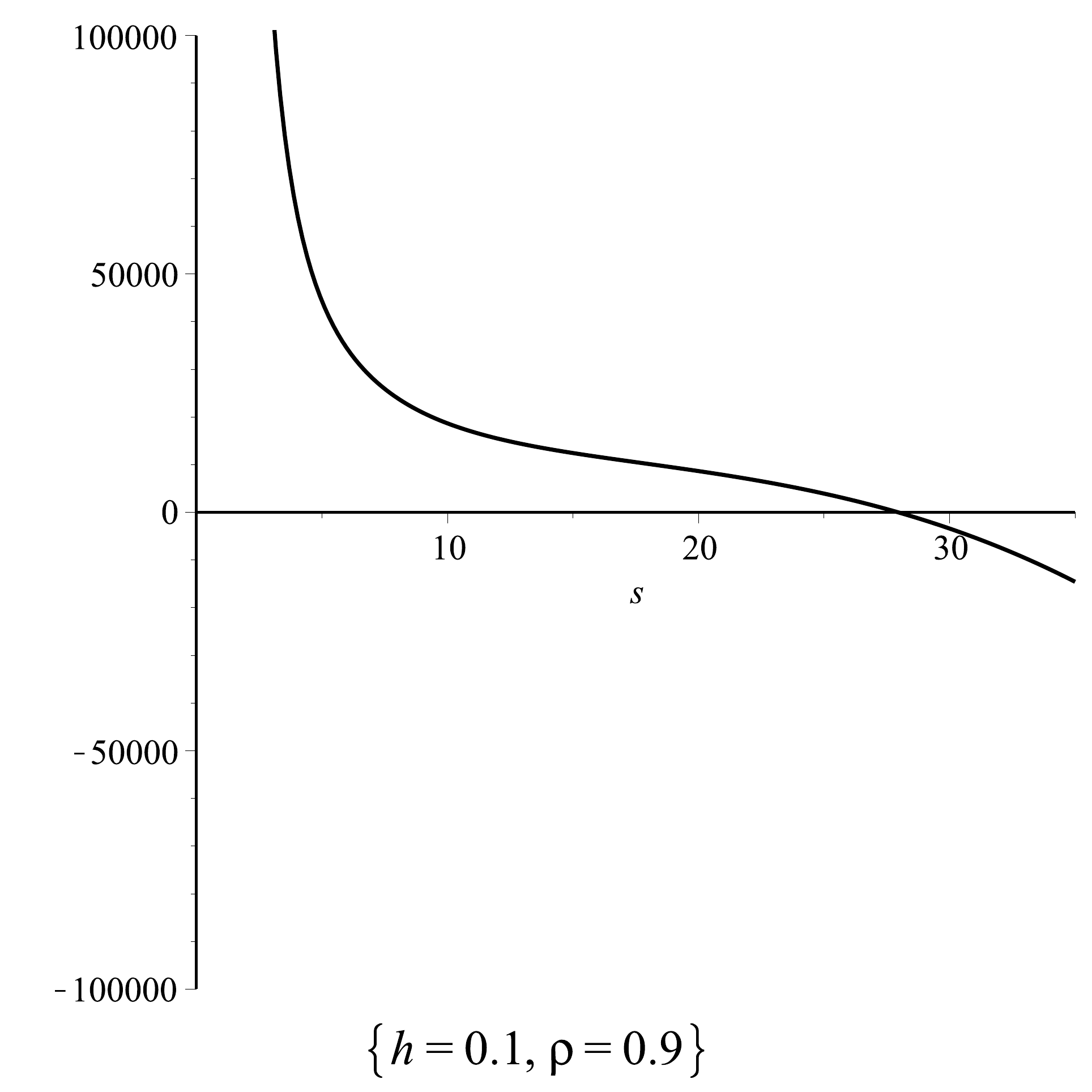}
\includegraphics[scale=0.35]{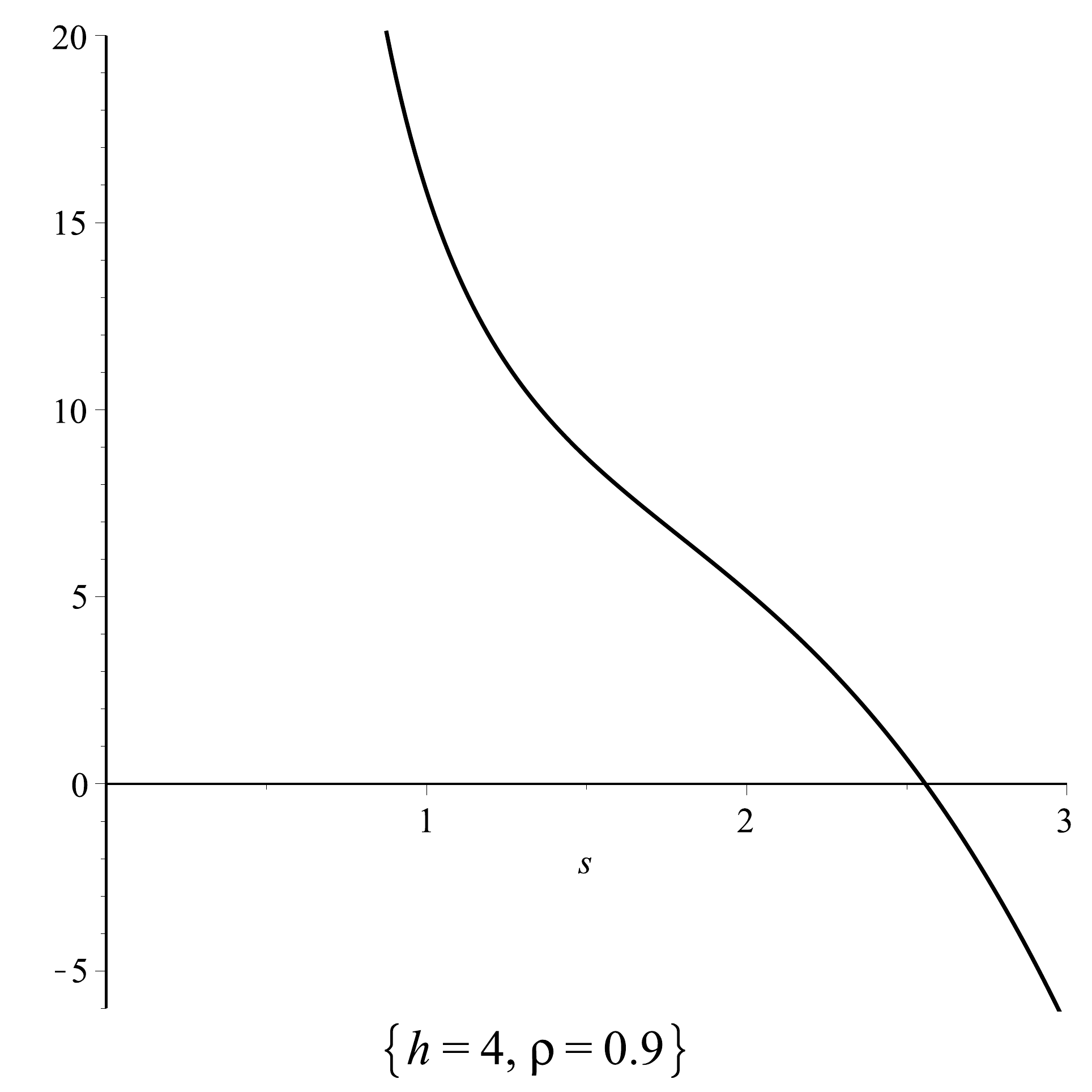}
\caption{Plots of $2\gamma_1c_3(k)$ as functions of $k$ for $h=0.1$, $h=4$ and for $\rho=0.1$, $\rho=0.9$. It seems that $h$ does not affect the overall profile of the graph.}\label{hplot}%kanske lägg till någon mer kommentar här
\end{center}
\end{figure}
\begin{figure}
\centering
\includegraphics{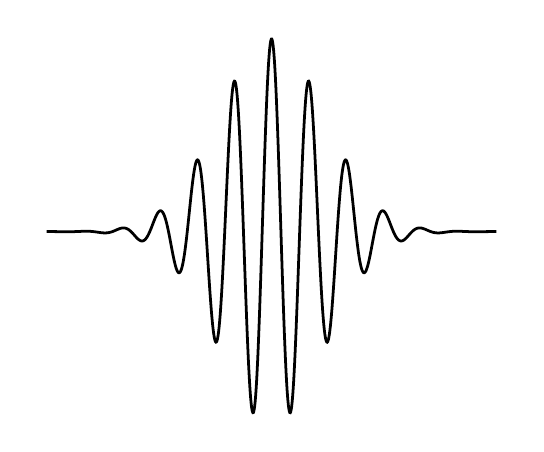}
\includegraphics{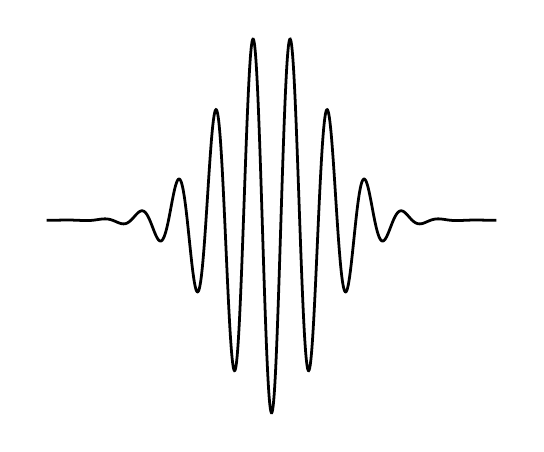}
\includegraphics{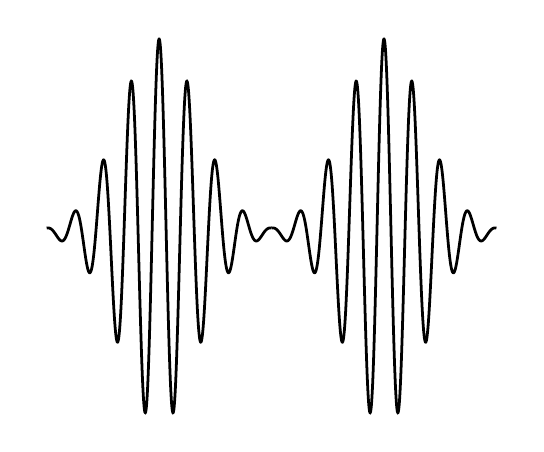}
\includegraphics{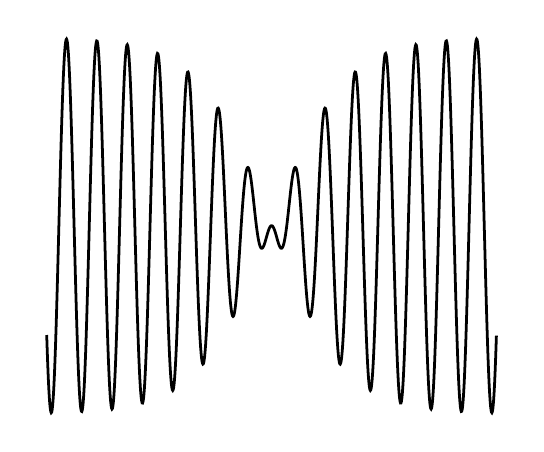}
\caption{Typical wave profiles of the different solutions found from theorem \ref{caseshamhopf}. From left to right: Bright solitary wave of elevation, bright solitary wave of depression, multipulse bright solitary wave of elevation, dark solitary wave.}
\label{darkbright}
\end{figure}
In our case we have $c_1<0$, and from figures \ref{rhoplot} and \ref{hplot} we see that $k,\ \rho$ and $h$ can be chosen so that $c_3<0$.  Hence, all of the above cases can occur if we allow $\delta$ to be negative. Moreover, it seems to be the case that $c_3$ can change sign at most once as a function of $k$. By doing a Laurent series expansion, we find that $c_3(k)>0$ for all $\rho\in [0,1)$, $h\in(0,\infty)$ and $k$ sufficiently small (see appendix \ref{c1c3}). Hence, when $k$ is small we do not find any dark solitary wave solutions. This is illustrated in figure \ref{fig2} where region $II$ have been divided into two parts. The region with the lighter shade is where there exist bright solitary waves and the region with the darker shade is where there exist dark solitary waves.

The plots in figure \ref{rhoplot} indicate that $c_3$ is positive when $\rho$ is small. This agrees with the result found in \cite{DI2} and \cite{BG}, where the authors calculate the corresponding coefficient for the surface wave case ($\rho=0$) and find that this is positive for all $k>0$. 

The Hamiltonian-Hopf bifurcation has been considered in infinite depth as well by Dias and Iooss in \cite{DI}. The authors found the solutions given in theorem \ref{caseshamhopf} and they also found a critical value of $\rho$ where $c_3$ change sign. When using the same non-dimensionalization as in \cite{DI} and letting $h_1$ and $h_2$ tend to infinity we find that 
\begin{equation*}
c_3\rightarrow Dc_3^*
\end{equation*}
where $D$ is some constant and $c_3^*$ is the coefficient for the term $A\abs{A}^2$ found in \cite{DI}. However, this is only a formal argument and does not imply that the solutions from theorem \ref{caseshamhopf} should persist when $h_1$ and $h_2$ tend to infinity.
Finally, we see from figures \ref{rhoplot} and \ref{hplot} that there exists values of $\rho$, $h$ and $k$ for which $c_3=0$. For such values one has to compute higher order terms of the normal form.
%\newpage

\subsection{Hamiltonian real 1:1 resonance}\label{11}
In this section we study the real $1:1$ resonance which occurs when crossing the curve $C_1^{\rho,h}$ in a small neighborhood around the point $(\beta_0,\alpha_0)$ (see region $I$ in figure \ref{fig2}). The Taylor expansions of $\hat{\alpha}(k)$ and $\hat{\beta}(k)$ are given by
\begin{align*}
\hat{\beta}(k)&=\beta_0+2\gamma_3k^2+\mathcal{O}(k^4),\\
\hat{\alpha}(k)&=\alpha_0+\gamma_3k^4+\mathcal{O}(k^6).
\end{align*}
where $\gamma_3=(\rho+h^3)/45$.
In order to study this case using the center manifold theorem, we let $(\beta,\alpha)=(\beta_0,\alpha_0)+\mu$, where $\mu=(\mu_1,\mu_2)=(2(1+\delta)\gamma_3\epsilon^2,\gamma_3\epsilon^4)$ for some $\delta<0$.
From section \ref{parameterregimes} we get that for the case when $\alpha=\alpha_0$, $\beta>\beta_0$, $0$ is an eigenvalue of algebraic multiplicity 2. Moreover, when $\alpha=\alpha_0$ and $\beta= \beta_0$, then $0$ is an eigenvalue of algebraic multiplicity 4, with generalized eigenvectors
\begin{equation}
e_1=\left( \begin{array}{c}
1\\
0\\
0\\
0\\
0\\
0
\end{array}\right)
,\quad e_2=\left( \begin{array}{c}
0\\
0\\
\frac{z^2-\frac{1}{3}}{2}\\
0\\
-\frac{h(z^2-\frac{1}{3})}{2}\\
0
\end{array}\right)
,\quad e_3=\left( \begin{array}{c}
0\\
0\\
0\\
\frac{\rho z(z^2-1)}{6}\\
0\\
-\frac{h^2z(z^2-1)}{6}
\end{array}\right)
,\quad e_4= \left( \begin{array}{c}
0\\
-\frac{\rho+h^3}{45}\\
-\frac{z^2}{12}\left(\frac{z^2}{2}-1\right)-\frac{7}{360}\\
0\\
\frac{h^3z^2}{12}\left(\frac{z^2}{2}-1\right)+\frac{7h^3}{360}\\
0
\end{array}\right),\label{geneigen04}
\end{equation}
such that $Le_1=0,\ Le_2=e_1,\ Le_3=e_2 $ and $Le_4=e_3$.
These vectors satisfy
\begin{align*}
&\Omega(e_1,e_2)=0,\ \Omega(e_1,e_3)=0,\ \Omega(e_1,e_4)=-\gamma_3,\\
&\Omega(e_2,e_3)=\gamma_3,\ \Omega(e_2,e_4)=0,\ \Omega(e_3,e_4)=-\frac{2(\rho+h^5)}{945}:=-\gamma_4.
\end{align*}
The vectors
\begin{equation}
W_1=\frac{e_4}{\sqrt{\gamma_3}},\ W_2=\frac{e_1}{\sqrt{\gamma_3}},\ W_3=\frac{e_2}{\sqrt{\gamma_3}},\ W_4=\frac{e_3-\frac{\gamma_4e_1}{\gamma_3}}{\sqrt{\gamma_3}},\label{symplectic04}
\end{equation}
satisfy $\Omega(W_1,W_2)=\Omega(W_3,W_4)=1$ and all other combinations are equal to 0. Hence, the vectors $W_i$, $i=1,\ldots ,4$ form a symplectic basis for the vector space spanned by $e_i$, $i=1,\ldots,4$.
As in the previous sections we find that $f_i=dG[0](W_i)$ is a symplectic basis of $E_1=P(\tilde{M})$, where the vectors $W_i$, $i=1,\ldots, 4$ are given in \eqref{symplectic04}. 
We apply the center manifold theorem together with Darboux's theorem and obtain a Hamiltonian system $(X_C,\Psi,\tilde{H}^\mu)$, where $X_C=\{u_1+r(u_1,\mu)\ :\ u_1\in \tilde{U}_1\}$ and $\tilde{U}_1$ is a neighborhood of $0$. Every element $u_1$ in $E_1$ can be written as 
\begin{equation*} 
u_1=(q_1,p_1,q_2,p_2)=q_1f_1+p_1f_2+q_2f_3+p_2f_4,
\end{equation*}
The vectors $f_i$, $i=1,\ldots,4$ are given by
\begin{alignat*}{2}
&f_1=\frac{1}{\sqrt{\gamma_3}}\left(\begin{array}{c}
0\\
\gamma_3\\
\rho\left(-\frac{z^2}{12}(\frac{z^2}{2}-1)-\frac{7}{360}\right)\\
0\\
\frac{h^3z^2}{12}(\frac{z^2}{2}-1)+\frac{7h^3}{360}\\
0
\end{array}\right)
, &&\quad f_2=\frac{1}{\sqrt{\gamma_3}}\left(\begin{array}{c}
1\\
0\\
0\\
0\\
0\\
0
\end{array}\right)
,\\
&f_3=\frac{1}{\sqrt{\gamma_3}}\left(\begin{array}{c}
0\\
\beta_0\\
0\\
0\\
0\\
0
\end{array}\right),
&&\quad f_4=\frac{1}{\sqrt{\gamma_3}}\left(\begin{array}{c}
-\frac{\gamma_4}{\gamma_3}\\
0\\
0\\
\frac{\rho z(z^2-1)}{6}\\
0\\
-\frac{h^2z(z^2-1)}{6}
\end{array}\right),
\end{alignat*}
with $\Psi(f_1,f_2)=\Psi(f_3,f_4)=1$ and every other combination is equal to $0$.
We begin by calculating the terms of order $\abs{u_1}^2$.
The transformation $S$ is given in coordinates by
\begin{equation*}
S:(q_1,p_1,q_2,p_2)\rightarrow (-q_1,p_1,-q_2,p_2),
\end{equation*}
which means that $\tilde{H}^\mu(-q_1,p_1,-q_2,p_2)=\tilde{H}^\mu(q_1,p_1,q_2,p_2)$.
From this we conclude that the coefficients of $q_1p_1,\ q_1p_2,\ q_2p_1$ and $q_2p_2$ are all equal to zero. The remaining coefficients are found by calculating
\begin{alignat*}{2}
H_2^{00}(f_1,f_1)&=-\frac{\gamma_4}{2\gamma_3},&&\quad H_2^{00}(f_2,f_2)=0,\\
H_2^{00}(f_3,f_3)&=0, &&\quad H_2^{00}(f_4,f_4)=\frac{1}{2},\\
H_2^{00}(f_1,f_3)&=-1,&&\quad H_2^{00}(f_2,f_4)=0,
\end{alignat*}   
which means that
\begin{equation*}
\tilde{H}_2^{00}(u_1)=-\frac{\gamma_4q_1^2}{2\gamma_3}-q_1q_2+\frac{p_2^2}{2}.
\end{equation*}
As in section \ref{hamhopf} we use normal form theory in order to compute higher order terms of the Hamiltonian. Using this we obtain (see \cite{EL}, \cite{MH})
\begin{equation*}
\tilde{H}^\mu(q_1,p_1,q_2,p_2)=\tilde{H}_2^{00}(q_1,p_1,q_2,p_2)+\hat{H}^\mu(I_1,I_2,I_3,I_4)+\mathcal{O}(\abs{u_1}^3\abs{\mu})+\mathcal{O}(\abs{u_1}^5),
\end{equation*}
where 
\begin{align*}
I_1&=p_1,\\
I_2&=q_2^2-2p_1p_2,\\
I_3&=q_2^3+3p_1^2q_1-3p_1p_2q_2-\frac{3\gamma_4}{\gamma_3}p_1^2q_2,\\
I_4&=-\frac{4\gamma_4}{\gamma_3}p_1^2p_2^2+\frac{\gamma_4}{\gamma_3}p_1p_2q_2^2-\frac{4}{3}p_1p_2^3+\frac{1}{2}p_2^2q_2^2-\frac{3}{2}p_1^2q_1^2\\
&\quad -\frac{3\gamma_4^2}{\gamma_3^2}p_1^3p_2-q_1q_2^3+3p_1p_2q_1q_2+\frac{3\gamma_4}{\gamma_3}p_1^2q_1q_2,
\end{align*}
and $\hat{H}$ is a polynomial function of the variables $I_1,I_2,I_3$,$I_4$ and of degree $4$ in $q_1,p_1,q_2,p_2$, with no constant or first order terms. In particular we find that
\begin{align}
\tilde{H}_3^{00}(u_1)&=c_1^{00}p_1^3+c_2^{00}p_1(q_2^2-2p_1p_2)\label{o3}\\
\tilde{H}_2^{10}(u_1)&=c_1^{10}\mu_1p_1^2+c_2^{10}\mu_1(q_2^2-2p_1p_2)\label{omu12}\\
\tilde{H}_2^{01}(u_1)&=c_1^{01}\mu_2p_1^2+c_2^{01}\mu_2(q_2^2-2p_1p_2)\label{omu22}\\
\tilde{H}_2^{20}(u_1)&=c_1^{20}\mu_1^2p_1^2+c_2^{20}\mu_1^2(q_2^2-2p_1p_2).\label{omu122}
\end{align}
We introduce the scaling $q_1(x)=\epsilon^7Q_1(X),\  p_1(x)=\epsilon^4P_1(X),\ q_2(x)=\epsilon^5Q_2(X),\ p_2(x)=\epsilon^6P_2(X)$. If we write
\begin{align}
\tilde{H}^\mu(q_1,p_1,q_2,p_2)&=-q_1q_2+\frac{p_2^2}{2}+c_1^{10}\mu_1p_1^2+c_2^{10}\mu_1(q_2^2-2p_1p_2)\nonumber \\
&\quad +c_1^{01}\mu_2p_1^2+c_1^{20}\mu_1^2p_1^2+c_1^{00}p_1^3+\mathcal{R}(q_1,p_1,q_2,p_2),\label{scaledham}
\end{align}
we see that every term is of order $\epsilon^{12}$, except the $\mu_1p_1^2$ term which is of order $\epsilon^{10}$, and $\mathcal{R}(q_1,p_1,q_2,p_2)$ is of order $\epsilon^{14}$. The idea is therefore to find the coefficients in \eqref{scaledham}.

Consider now terms of order $\abs{u_1}\mu_1$ in \eqref{hamfullmod}. We then get the following equations:
\begin{align*}
p_1\mu_1&:Kr_{0100}^{10}=-F_1^{10}(f_2)+2c_1^{10}f_1-2c_2^{10}f_3,\\
q_2\mu_1&:Kr_{0010}^{10}-r_{0100}^{10}=-F_1^{10}(f_3)-2c_2^{10}f_4,\\
p_2\mu_1&:Kr_{0001}^{10}-r_{0010}^{10}=-F_1^{10}(f_4)-2c_2^{10}f_1,
\end{align*} 
and calculations show that 
\begin{equation*}
F_1^{10}(f_i)=0, \ i=1,2,3,\quad F_1^{10}(f_4)=\frac{1}{2\sqrt{\gamma_3}\beta_0}\left(\begin{array}{c}
0\\
0\\
\rho(z^2-\frac{1}{3})\\
0\\
-h(z^2-\frac{1}{3})\\
0
\end{array}\right),
\end{equation*}
which implies that
\begin{align}
Kr_{0100}^{10}&=2c_1^{10}f_1-2c_2^{10}f_3\label{eq1102},\\
Kr_{0010}^{10}-r_{0100}^{10}&=-2c_3^{10}f_4\label{eq1103},\\
Kr_{0001}^{10}-r_{0010}^{10}&=-F_1^{10}(f_4)-2c_2^{10}f_1\label{eq1104}.
\end{align} 
Recall that $f_1=dG[0](W_1)$ and $W_1=\frac{e_4}{\sqrt{\gamma_3}}$. This implies that \eqref{eq1102} has no solutions unless 
\begin{equation*}
c_1^{10}=0.
\end{equation*}
It follows directly from this choice of $c_1^{10}$ that $r_{0100}^{10}=-2c_2^{10}f_4+Af_2$, where $A$ is a constant. Consider next \eqref{eq1103}:
\begin{equation*}
Kr_{0010}^{10}=Af_2-4c_2^{10}f_4.
\end{equation*}
The solution of this equation is given by
\begin{equation}
r_{0010}^{10}=-4c_2^{10}f_1+Bf_2+Cf_3,\label{r001010}
\end{equation}
for some constants $B$ and $C$.
This is used in \eqref{eq1104} to obtain
\begin{equation*}
Kr_{0001}^{10}=-6c_2^{10}f_1-F_1^{10}(f_4)+Bf_2+Cf_3,
\end{equation*}
and this equation is solvable if and only if 
\begin{equation*}
c_2^{10}=-\frac{1}{6\gamma_3}.
\end{equation*}
The coefficient $c_1^{01}$ is found from the calculation
\begin{align*}
c_1^{01}&=H_2^{01}[f_2,f_2]+2H_2^{00}[f_2,r_{0100}^{01}]\\
&=H_2^{01}[f_2,f_2]=-\frac{1}{2\gamma_3},
\end{align*}
where we used \eqref{skewsymmetry} to conclude that $2H_2^{00}[f_2,r_{0100}^{01}]=\Psi(Kf_2,r_{0100}^{01})=0$.

Consider next terms of order $p_1\mu_1^2$ in \eqref{hamfullmod}.% In order to do this we use that
%\begin{equation*}
%\tilde{H}_2^{20}(u_1)=\sum_{k+l+m+n=2}c_{klmn}^{20}\mu_1^2q_1^kp_1^lq_2^mp_2^n,
%\end{equation*}
%which implies that
%\begin{align*}
%N_1^{20}&=(2c_{0200}^{20}p_1\mu_1^2+c_{0101}^{20}p_2\mu_1^2)f_1-(2c_{2000}^{20}q_1\mu_1^2+c_{1010}^{20}q_2\mu_1^2)f_2\\
%&\quad +(2c_{0002}^{20}p_2\mu_1^2+c_{0101}^{20}p_1\mu_1^2)f_3-(2c_{0020}^{20}q_2\mu_1^2+c_{1010}^{20}q_1\mu_1^2)f_4.
%\end{align*}
%When considering terms of order $p_1\mu_1^2$ \eqref{hamfullmod}, we obtain
\begin{equation}
Kr_{0100}^{20}=-F_1^{20}(f_2)-F_1^{10}(r_{0100}^{10})+2c_1^{20}f_1-2c_2^{20}f_3-2c_2^{10}r_{0010}^{10}\label{c020020}.
\end{equation}
We find that $F_1^{20}(f_2)=0$ and since $r_{0100}^{10}=-2c_2^{10}f_1+Af_2$, we have that $F_1^{10}(r_{0100}^{10})=-2c_2^{10}F_1^{10}(f_4)$. 
%When taking the symplectic product with $f_2$ on both sides in \eqref{c020020} we obtain
%\begin{equation*}
%c_1^{20}=\frac{1}{2}\Psi(Kr_{0100}^{20},f_2)=-\frac{1}{2}\Psi(r_{0100}^{20},Kf_2)=0.
%\end{equation*}
%förklara skew-symmetry%
Using this and the expression for $r_{0010}^{10}$ given in \eqref{r001010}, we find that \eqref{c020020} is solvable if and only if 
\begin{equation*}
c_1^{20}=\frac{1}{18\gamma_3^2}.
\end{equation*}
Finally we calculate $c_1^{00}$. This is done by considering the $p_1^2$ component of \eqref{hamfullmod}:
\begin{align}
Kr_{0200}^{00}&=-F_2^{00}(f_2,f_2)+3c_1^{00}f_1-2c_2^{00}f_3\label{c0300}\Rightarrow\\
c_1^{00}&=\frac{\Psi(F_2^{00}(f_2,f_2),f_2)}{3}=\frac{(\rho-\frac{1}{h^2})}{2\gamma_3^{\frac{3}{2}}}.\nonumber
\end{align}
The variables are scaled in the following way:
\begin{equation*}
X=\epsilon x,\quad q_1(x)=\epsilon^7Q_1(X),\quad p_1(x)=\epsilon^4P_1(X),\quad q_2(x)=\epsilon^5Q_2(X),\quad p_2(x)=\epsilon^6P_2(X).
\end{equation*}
Then, from \eqref{scaledham}
\begin{align*}\tilde{H}^\mu(Q_1,P_1,Q_2,P_2)&=\frac{1}{2}\epsilon^{12}P_2^2-\epsilon^{12}Q_1Q_2-\frac{\mu_1\epsilon^{10}}{6\gamma_3}\left(Q_2^2-2P_1P_2\right)\\
&\quad -\frac{\mu_2}{2\gamma_3}\epsilon^8P_1^2+\frac{1}{18\gamma_3^2}\mu_1^2\epsilon^8P_1^2+\frac{\rho -\frac{1}{h^2}}{2\gamma_3^\frac{3}{2}}P_1^3+\mathcal{O}(\epsilon^{14})\\
&=\epsilon^{12}\left(\frac{P_2^2}{2}-Q_1Q_2-\frac{1+\delta}{3}(Q_2^2-2P_1P_2)-\frac{P_1^2}{2}+\frac{2(1+\delta)^2P_1^2}{9}+\frac{(\rho -\frac{1}{h^2})P_1^3}{2\gamma_3^\frac{3}{2}}\right)\\
&\quad+\mathcal{O}(\epsilon^{14}).
\end{align*}
It follows that Hamilton's equations are given by
\begin{align}
Q_{1X}&=-P_1+\frac{3P_1^2(\rho -\frac{1}{h^2})}{2\gamma_3^\frac{3}{2}}+\frac{2(1+\delta)P_2}{3}+\frac{4(1+\delta)^2P_1}{9}+\mathcal{O}(\epsilon)\label{fsys1},\\
P_{1X}&=Q_2+\mathcal{O}(\epsilon)\label{fsys2},\\
Q_{2X}&=P_2+\frac{2(1+\delta)P_1}{3}+\mathcal{O}(\epsilon)\label{fsys3},\\
P_{2X}&=Q_1+\frac{2(1+\delta)Q_2}{3}+\mathcal{O}(\epsilon)\label{fsys4}.
\end{align}
Hence, when $\epsilon\rightarrow 0$, the above system is equivalent to the fourth order ODE
\begin{equation}
\partial_X^4P_1-2(1+\delta)\partial_X^2P_1+P_1-\frac{3(\rho-\frac{1}{h^2})P_1^2}{2\gamma_3^\frac{3}{2}}=0\label{ode1}.
\end{equation}
Let $w=3(\rho -1/h^2)/(2\gamma_3^\frac{3}{2})P_1$. Then equation \eqref{ode1} becomes 
\begin{equation}
\partial_X^4w-2(1+\delta)\partial_X^2w+w-w^2=0 \label{ode2}.
\end{equation}
This equation has been studied in \cite{BCT}. The authors showed that for $\delta= 0$, equation \eqref{ode2} has a non-trivial solution which is unique up to translation. The orbit of this solution lies in the transverse intersection, with respect to the zero energy manifold, of the stable and unstable manifolds of the origin. By the stable manifold theorem this also holds for \eqref{fsys1}--\eqref{fsys4}, for sufficiently small $\epsilon$ and $\delta<0$. We call this the primary homoclinic solution.  For $\delta<0$ the equilibrium solution $0$ is a saddle point of \eqref{fsys1}--\eqref{fsys4}. It is shown in \cite{DV} that this implies the existence of a countable family of solutions which resemble multiple copies of the primary homoclinic solution. The existence of such solutions is also discussed in \cite{BCT} for equation \eqref{ode2}. If $P_1$ is a solution of \eqref{ode2} then, in original variables, the wave profile is given by
\begin{equation*}
\eta(x)=h_1\epsilon^4\sqrt{\gamma_3}P_1\left(\frac{\epsilon x}{h_1}\right)+ \mathcal{O}(\epsilon^5).
\end{equation*}
These solutions correspond to multi-troughed solitary waves of depression when $\rho-1/h^2<0$ and of elevation when $\rho-1/h^2>0$.

Consider next the case when $\rho-1/h^2$ is small. For this case we include the term of order $p_1^4$ in the Taylor expansion of the reduced Hamiltonian. We have that
\begin{equation*}
\tilde{H}_4^{00}(u_1)=e_1^{00}p_1^4+e_2^{00}(q_2-2p_1p_2)^2+e_3^{00}p_1^2(q_2^2-2p_1p_2)+e_4^{00}I_4
\end{equation*}
In order to find the coefficient $e_1^{00}$  the $p_1^3$ component of \eqref{hamfullmod} is considered.
\begin{align*}
Kr_{0300}^{00}&=-F_3^{00}(f_2,f_2,f_2)-2F_2^{00}(f_2,r_{0200}^{00})+4e_1^{00}f_1-(2e_3^{00}+\frac{3\gamma_4^2e_4^{00}}{\gamma_3^2})f_3\\
&\quad -2c_2^{00}r_{0110}^{00}+3c_1^{00}r_{1100}^{00}.
\end{align*}
When taking the symplectic product with $f_2$ of both sides of the above equation, we obtain
\begin{equation}
4e_1^{00}=\Psi(F_3^{00}(f_2,f_2,f_2),f_2)+2\Psi(F_2^{00}(f_2,r_{0200}^{00}),f_2)+2c_2^{00}\Psi(r_{0110}^{00},f_2)-3c_1^{00}\Psi(r_{1100}^{00},f_2)\label{c0400}.
\end{equation}
Hence, we need to find $r_{0200}^{00}$, $r_{0110}^{00}$ and $c_2^{00}$. It won't be necessary to calculate $c_1^{00}\Psi(r_{1100}^{00},f_2)$, since we assume that $\rho-1/h^2$ is small and $c_1^{00}=\mathcal{O}(\rho-1/h^2)$. Consider \eqref{c0300}
\begin{equation*}
Kr_{0200}^{00}=-F_2^{00}(f_2,f_2)+3c_1^{00}f_1-2c_2^{00}f_3.
\end{equation*}
This equation is solvable with solution
\begin{equation*}
r_{0200}^{00}=Z_1-2c_2^{00}f_4+Af_2,
\end{equation*}
where $A$ is an arbitrary constant and 
\begin{equation*}
Z_1=\frac{3(\rho-\frac{1}{h^2})}{2\gamma_3}\left(\begin{array}{c}
0\\
0\\
0\\
\frac{\rho}{12\gamma_3}\left(-\frac{z^5}{10}+\frac{z^3}{3}-\frac{7z}{30}\right)\\
0\\
\frac{h^4}{12\gamma_3}\left(\frac{z^5}{10}-\frac{z^3}{3}+\frac{7z}{30}\right)
\end{array}\right),
\end{equation*}
is a solution of the equation 
\begin{equation*}
Ku=-F_2^{00}(f_2,f_2)+3c_1^{00}f_1.
\end{equation*}
The $p_1q_2$ component of \eqref{hamfullmod}, is given by
\begin{equation}
Kr_{0110}^{00}-2r_{0200}^{00}=-2F_2^{00}(f_2,f_3)-2c_2^{00}f_4 \label{eqr0110}.
\end{equation}
and $F_2^{00}(f_2,f_3)=0$, so the solution of \eqref{eqr0110} is given by
\begin{equation}
r_{0110}^{00}=Z_2-6c_2^{00}f_1+Bf_2+Cf_3\label{r0110},
\end{equation}
where $B$ and $C$ are arbitrary constants and
\begin{equation*}
Z_2=\frac{\rho-\frac{1}{h^2}}{\gamma_3^2}\left(\begin{array}{c}
0\\
3\gamma_4\\
\frac{\rho}{48}\left[\frac{z^6}{5}-z^4+\frac{7z^2}{5}-\frac{31}{105}\right]\\
0\\
-\frac{h^5}{48}\left[\frac{z^6}{5}-z^4+\frac{7z^2}{5}-\frac{31}{105}\right]\\
0
\end{array}\right),
\end{equation*}
is a solution of the equation
\begin{equation*}
Ku=2Z_1.
\end{equation*}
When considering terms of order $p_1p_2$ in \eqref{hamfullmod} we get
\begin{align*}
Kr_{0101}^{00}&=r_{0110}^{00}-2F_2^{00}(f_2,f_4)-4c_2^{00}f_1\\
&=Z_2-2F_2^{00}(f_2,f_4)-10c_2^{00}f_1+Bf_2+Cf_3
\end{align*}
and $Z_2-2F_2^{00}(f_2,f_4)-10c_2^{00}f_1\in \mathcal{R}(K)$ if and only if
\begin{equation}
c_2^{00}=\frac{1}{5\gamma_3^{\frac{3}{2}}}\left(\frac{3\left(\rho-\frac{1}{h^2}\right)\gamma_4}{\gamma_3}+\frac{\rho-1}{3}\right)\label{c0201},
\end{equation}
where
\begin{equation*}
F_2^{00}(f_2,f_4)=\frac{1}{2\gamma_3}\left(\begin{array}{c}
\frac{2(\rho-1)}{3}\\
\frac{(3z^2-1)\rho)}{3}-\left(\frac{4(\rho-1)}{3}+\frac{3\gamma_4\left(\rho-\frac{1}{h^2}\right)}{\gamma_3}\right)\frac{\left(z^2-\frac{1}{3}\right)\rho}{2\beta_0}\\
0\\
\frac{3z^2-1}{3}+\left(\frac{4(\rho-1)}{3}+\frac{3\gamma_4\left(\rho-\frac{1}{h^2}\right)}{\gamma_3}\right)\frac{\left(z^2-\frac{1}{3}\right)h}{2\beta_0}\\
0
\end{array}\right).
\end{equation*}
%The $q_1p_1$ component of \eqref{hamfullmod} is given by
%\begin{equation*}
%Kr_{1100}^{00}-\frac{2\gamma_4}{\gamma_3}r_{0200}^{00}=-2F_2^{00}(f_1,f_2)-2c_{2100}^{00}f_2,
%\end{equation*}
%with solution
%\begin{equation}
%r_{1100}^{00}=\frac{\gamma_4}{\gamma_3}Z_2-2Z_3-c_{2100}^{00}f_3+c_3f_2\label{r1100},
%\end{equation}
%where $c_3$ is an arbitrary constant and 
%\begin{equation*}
%Z_3=\frac{1}{2\gamma_3}\left(\begin{array}{c}
%0\\
%-\frac{\rho+h^2}{45}\\
%\frac{\rho(\frac{z^4}{2}-z^2)}{12}+\frac{7\rho}{360}\\
%0\\
%-\frac{h^3(\frac{z^4}{2}-z^2)}{12}-\frac{7h^2}{360}\\
%0
%\end{array}\right),
%\end{equation*}
%is a solution of the equation
%\begin{equation*}
%Ku=F_2^{00}(f_1,f_2).
%\end{equation*}
In conclusion, 
\begin{align*}
r_{0200}^{00}&=Z_1-2c_2^{00}f_4+Af_2,\\
r_{0110}^{00}&=Z_2-6c_2^{00}f_1+Bf_2+Cf_3,
\end{align*}
and $c_2^{00}$ is given in \eqref{c0201}.
This is used in \eqref{c0400}, and it is found that
\begin{equation*}
e_1^{00}=\frac{\rho+\frac{1}{h^3}}{\gamma_3^2}+\frac{2(\rho-1)^2}{225\gamma_3^3}+\left(\rho-\frac{1}{h^2}\right)D,
\end{equation*}
where $D$ is a constant depending on $\rho$ and $h$.
%\begin{align*}
%\tilde{H}^\mu(q_1,p_1,q_2,p_2)&=-\frac{\gamma_4}{2\gamma_3}q_1^2-q_1q_2+\frac{p_2^2}{2}-(1+\delta)\epsilon^2q_2^2 -\frac{\epsilon^4p_1^2}{2}+\frac{(\rho-\frac{1}{h^2})p_1^3}{2\gamma_3^{\frac{3}{2}}}\\
%&\quad -\frac{9(\rho-\frac{1}{h^2})}{16\gamma_3^{\frac{3}{2}}}\left(\frac{\gamma_3}{\beta_0}+\frac{4\gamma_4}{\gamma_3}\right)p_1^2p_2+\left(\frac{\rho+\frac{1}{h^3}}{\gamma_3^2}+\frac{(\rho-1)^2}{450\gamma_3^3}+\left(\rho-\frac{1}{h^2}\right)D\right)p_1^4\\
%&\quad+\mathcal{O}(\abs{q_1,q_2,p_2}^3)+\mathcal{O}(\abs{p_1}\abs{q_1,q_2,p_2}^2)+\mathcal{O}(\abs{p_1^5}).
%\end{align*}
The variables are scaled in the following way:
\begin{equation*}
X=\epsilon x,\ q_1=\epsilon^5Q_1(X),\ p_1(x)=\epsilon^2 P_1(X),\ q_2(x)=\epsilon^3Q_2(X), \  p_2(x)=\epsilon^4P_2(X),
\end{equation*}
and we assume that $\rho-1/h^2=\epsilon^2 \kappa$, where $\kappa$ is a constant which can be both positive and negative. The Taylor expansion of the reduced Hamiltonian becomes
\begin{align*}
\tilde{H}^\mu(Q_1,P_1,Q_2,P_2)&=\epsilon^8\left(-Q_1Q_2+\frac{P_2^2}{2}-\frac{(1+\delta)}{3}(Q_2^2-2P_1P_2)-\frac{P_1^2}{2}\right.\\
&\left.\quad+ \frac{2(1+\delta)^2P_1^2}{9}+\frac{\kappa P_1^3}{2\gamma_3^{\frac{3}{2}}}+\frac{\left(\rho+\frac{1}{h^3}+\frac{2(\rho-1)^2}{225\gamma_3}\right)P_1^4}{\gamma_3^2}\right)+\mathcal{O}(\epsilon^{10}),\label{hammutilde}
\end{align*}
and, Hamilton's equations are given by
\begin{align}
Q_{1X}&=-P_1+\frac{3\kappa P_1^2}{2\gamma_3^{\frac{3}{2}}}+\frac{4\left(\rho+\frac{1}{h^3}+\frac{2(\rho-1)^2}{225\gamma_3}\right)P_1^3}{\gamma_3^2} +\frac{2(1+\delta)P_2}{3}\\
&\quad+\frac{4(1+\delta)^2P_1}{9}+\mathcal{O}(\epsilon),\label{11fulleq1}\\
P_{1X}&=Q_2+\mathcal{O}(\epsilon),\label{11fulleq2}\\
Q_{2X}&=P_2+\mathcal{O}(\epsilon),\label{11fulleq3}\\
P_{2X}&=Q_1+2(1+\delta)Q_2+\mathcal{O}(\epsilon).\label{11fulleq4}
\end{align}
As in the previous case, the truncated system is equivalent to a fourth order ODE:
\begin{equation}
\partial_X^4P_1-2(1+\delta)\partial_X^2P_1+P_1-\frac{3\kappa P_1^2}{2\gamma_3^{\frac{3}{2}}}-\frac{4\left(\rho+\frac{1}{h^3}+\frac{2(\rho-1)^2}{225\gamma_3}\right)P_1^3}{\gamma_3^2}=0 \label{ode23}.
\end{equation}
Let 
\begin{equation*}
u=\frac{2}{\gamma_3}\left(\rho+\frac{1}{h^3}+\frac{2(\rho-1)^2}{225\gamma_3}\right)^\frac{1}{2}P_1,
\end{equation*}
then $u$ satisfies
\begin{equation}
\partial_X^4u-2(1+\delta)\partial_X^2u-f(u)=0\label{ode33},
\end{equation}
where
\begin{equation*}
f(u)=-u+c\kappa u^2+u^3,\quad c=\frac{3}{4\gamma_3^\frac{1}{2}\left[\rho+\frac{1}{h^3}+\frac{2(\rho-1)^2}{225\gamma_3}\right]^\frac{1}{2}}
\end{equation*}
From appendix \ref{B} we know that for $\kappa=0$ and $\delta=0$ there exists a homoclinic solution which corresponds to a transversal intersection of the stable and unstable manifolds in the zero energy manifold. Moreover, we know from appendix \ref{B} that such a solution is either positive or negative. It is clear that if $u$ is a positive solution of \eqref{maineq}, then $-u$ is a negative solution of \eqref{maineq}. Hence, there exists both a positive and a negative solution of \eqref{maineq}. By the stable manifold theorem it follows that for sufficiently small $\delta<0$, $\kappa$ and $\epsilon$, these solutions persist for the full system \eqref{11fulleq4}. Like in the previous case, we may conclude using the theory from \cite{DV}, that for $\delta<0$, $\kappa$ and $\epsilon>0$ sufficiently small, there exist two countable families of homoclinic solutions, each family corresponding to one of the above solutions, of \eqref{11fulleq1}--\eqref{11fulleq4}. In original variables, the wave profile is given by
\begin{equation*}
\eta(x)=h_1\epsilon^2\sqrt{\gamma_3}P_1\left(\frac{\epsilon x}{h_1}\right)+ \mathcal{O}(\epsilon^3).
\end{equation*}
\subsection{$0^2$-resonance}\label{02}
In this section we study the $0^2$ resonance which occurs when crossing the curve $C_3^{\rho,h}$ (see region $III$ in figure \ref{fig2}. We therefore let $(\beta,\alpha)=(\beta,\alpha_0)+(0,\delta)$, where $\beta>\beta_0$. We know from section \ref{parameterregimes} that for these values of $\alpha$ and $\beta$ the imaginary part of the spectrum of $L$ consists of $0$, which is an eigenvalue of algebraic multiplicity $2$ with generalized eigenvectors
\begin{equation}
e_1=\left( \begin{array}{c}
1\\
0\\
0\\
0\\
0\\
0
\end{array}\right)
,\quad e_2=\left( \begin{array}{c}
0\\
\beta-\frac{\rho+h}{3}\\
\frac{z^2-\frac{1}{3}}{2}\\
0\\
-\frac{h(z^2-\frac{1}{3})}{2}\\
0
\end{array}\right)\label{geneigen02}
\end{equation}
such that $Le_1=0$, $Le_2=e_1$ with $\Omega(e_1,e_2)=\beta-\beta_0$.
Let $\gamma_5=\beta-\beta_0$ and
\begin{equation*}
W_1=\frac{e_1}{\sqrt{\gamma_5}},\quad W_2=\frac{e_2}{\sqrt{\gamma_5}}
\end{equation*}
Then $W_1,W_2$ is a symplectic basis for the vector space spanned by $e_1,e_2$. Letting $f_i=dG(0)(W_i)$ we find as in the previous section that $f_i$ is a symplectic basis of $E_1=P(\tilde{M})$. 
We apply the center manifold theorem together with Darboux's theorem and obtain a Hamiltonian system $(X_C^\mu,\Psi,\tilde{H}^\mu)$, where $X_C^\mu=\{u_1+r(u_1,\mu)\ :\ u_1\in \tilde{U}_1\}$ and $\tilde{U}_1$ is a neighborhood of $0$. Every element $u_1$ in $E_1$ can be written as
\begin{equation*}
u_1=(q,p)=qf_1+pf_2,
\end{equation*}
where the vectors $f_1$, $f_2$ are given by 
\begin{equation*}
f_1= \frac{1}{\sqrt{\gamma_5}}\left( \begin{array}{c}
1\\
0 \\
0\\
0\\
0\\
0
\end{array} \right),
\ f_2=\frac{1}{\sqrt{\gamma_5}}\left( \begin{array}{c}
0\\
\frac{\rho+h}{3}\\
0\\
0\\
0\\
0
\end{array}\right),
\end{equation*}
and satisfy $\Psi(f_1,f_2)=1$.  In coordinates $(q,p)$, the transformation $S$ is given by
\begin{equation*}
S(q,p)\rightarrow (q,-p)
\end{equation*}
Hence, we have that $\tilde{H}^\mu(q,-p)=\tilde{H}^\mu(q,p)$.
The Taylor expansion of the reduced Hamiltonian is given by 
\begin{equation*}
\tilde{H}^\mu(q,p)=\sum_{i+j+k=2}^3c_{jk}^i\delta^iq^jp^k+\mathcal{O}(\abs{(\delta,p,q)}^4)
\end{equation*}
and the Taylor expansion of $r$ by
\begin{equation*}
r(q,p,\delta)=\sum_{i+j+k=2}^3r_{jk}^i\delta^iq^jp^k+\mathcal{O}(\abs{(\delta,p,q)}^4)
\end{equation*}
Since $\tilde{H}^\mu$ is invariant under $S$, we can directly conclude that $c_{j1}^i=0$ for all $i,j$. 
We have that
\begin{equation*}
c_{02}^0=H_2^0[f_2,f_2]=\frac{1}{2},\ c_{20}^0=H_2^0[f_1,f_1]=0,
\end{equation*}
and
\begin{align*}
c_{20}^1=H_2^1[f_1,f_1]+2M_2^0[f_1,r_{10}^1]
\end{align*}
where
\begin{equation*}
H_2^1[f_1,f_1]=-\frac{1}{2\gamma_5}
\end{equation*}
and 
\begin{equation*}
2H_2^0[f_1,r_{1,0}^1]=\Psi(Kf_1,r_{10}^1)=0,
\end{equation*}
by \eqref{skewsymmetry} and the identity $Kf_1=0$.
Finally,
\begin{equation*}
c_{30}^0=H_3^0[f_1,f_1,f_1]+2H_2^0[f_1,r_{2,0}^0]=\frac{\rho-\frac{1}{h^2}}{2\gamma_5^{\frac{3}{2}}}.
\end{equation*}
The Taylor expansion of the reduced Hamiltonian is therefore given by
\begin{equation*}
\tilde{H}^\mu(q,p)=\frac{1}{2}p^2-\frac{1}{2\gamma_5}\delta q^2+\frac{\rho -\frac{1}{h^2}}{2\gamma_5^{\frac{3}{2}}}q^3+\mathcal{O}(\abs{p}\abs{(p,q)}\abs{(\delta,p,q)})+\mathcal{O}(\abs{(p,q)}^2\abs{(\delta,p,q)}^2).
\end{equation*}
From this we get Hamilton's equations 
\begin{align}
q_x&=p+\mathcal{O}(\abs{(p,q)}\abs{(\delta,p,q)})\label{hamiltonian_1},\\
p_x&=\frac{\delta q}{\gamma_5}+\frac{3(\frac{1}{h^2}-\rho)q^2}{2\gamma_5^{\frac{3}{2}}}+\mathcal{O}(\abs{p}\abs{(\delta,p,q)})+\mathcal{O}(\abs{(p,q)}\abs{(\delta,p,q)}^2).\label{hamiltonian_2}
\end{align}
Assume first that $1-\rho h^2\neq 0$. 
We make the following change of variables:
\begin{equation*}
X=\frac{\delta^\frac{1}{2}x}{\sqrt{\gamma_5}},\ q(x)=\gamma_5^2\delta Q(X),\ p(x)=\delta^{\frac{3}{2}}\gamma_5^{\frac{3}{2}}P(X),
\end{equation*}
which transforms equations \eqref{hamiltonian_1} and \eqref{hamiltonian_2} into
\begin{align}
Q_X&=P+\mathcal{O}(\delta^\frac{1}{2})\label{fullsystem021},\\
P_X&=Q+\frac{3A(\rho,h)}{2}Q^2+\mathcal{O}(\delta^\frac{1}{2}),\label{fullsystem022}
\end{align}
where $A(\rho,h)=\gamma_5^\frac{3}{2}(\frac{1}{h^2}-\rho)$.
The truncated system
\begin{align}
Q_X&=P\label{hamiltonian_trunc1}\\
P_X&=Q+\frac{3A(\rho,h)}{2}Q^2\label{hamiltonian_trunc2}
\end{align}
has the solution
\begin{align*}
Q(X)&=-\frac{\text{sech}^2(\frac{X}{2})}{A(\rho,h)}\\
P(X)&=\frac{\text{sech}^2(\frac{X}{2})\tanh(\frac{X}{2})}{A(\rho,h)}
\end{align*}
which corresponds to a symmetric homoclinic orbit surrounding the equilibrium $(Q,P)=(-\frac{2}{3A(\rho,h)},0)$.
For $\delta=0$, the stable manifold $W_s^0$ corresponding to the equilibrium $(0,0)$ is equal to the homoclinic orbit. Note that $(Q(0),P(0))=(-\frac{1}{A(\rho,h)},0)$. From this we get that the tangent space at $(-\frac{1}{A(\rho,h)},0)$ of $W_s^0$ is given by $\{Q=0\}$. This implies that the homoclinic orbit intersects the set $\{P=0\}$ transversally. This will also hold for $W_s^\delta$, for $\delta$ sufficiently small, by the stable manifold theorem which states that $W_s^\delta$ depends smoothly upon $\delta$. Let $(Q_0^\delta,0)$ be the intersection between $W_s^\delta$ and the set $\{P=0\}$. We use this point as initial data for the system \eqref{fullsystem021}-\eqref{fullsystem022} and find a solution $(Q,P)$. By assumption, $Q_0^\delta$ belongs to the stable manifold of $0$, which means that $(Q(X),P(X))\rightarrow (0,0)$ as $X\rightarrow \infty$. Moreover, the system \eqref{fullsystem021}-\eqref{fullsystem022} is invariant under the transformation $(Q(X),P(X))\rightarrow (Q(-X),-P(-X))$. It follows that $(Q(-X),-P(-X))$ is a solution of \eqref{fullsystem021}-\eqref{fullsystem022} as well, with the same initial data. By uniqueness these solutions must coincide. Also note that $(Q(-X),-P(-X))\rightarrow 0$ as $X\rightarrow -\infty$. It follows that for $\delta$ sufficiently small we can find a unique homoclinic solution of \eqref{fullsystem021}-\eqref{fullsystem022}.
In the original variables the corresponding wave profile $\eta$, is given by
\begin{equation*}
\eta(x)=-\frac{h_1\delta \text{sech}^2(\frac{\sqrt{\delta} x}{2h_1\sqrt{\gamma_1}})}{\frac{1}{h^2}-\rho}+\mathcal{O}(\delta^\frac{3}{2}).
\end{equation*}
Note that this is a wave of depression when $\rho<\frac{1}{h^2}$ and a wave of elevation when $\rho>\frac{1}{h^2}$. This agrees with the results found in \cite{KM}.

Let us now consider the case when $\rho-\frac{1}{h^2}$ is small. Similar to the real 1:1 resonance we let $\rho-1/h^2=\kappa\sqrt{\delta}$, where $\kappa$ is a constant and calculate the coefficient $c_4$ of the $q^4$ term in the Taylor expansion of the reduced Hamiltonian. Using the same methods as in the two previous sections, we find that
\begin{equation*}
c_{40}^0=\frac{\Psi(F_3^0[f_1,f_1,f_1],f_1])}{4}=\frac{\rho+\frac{1}{h^3}\mathcal{O}(\sqrt{\delta})}{\gamma_5^2}
\end{equation*}
which implies that
\begin{equation*}
\tilde{H}^\mu(q,p)=\frac{p^2}{2}-\frac{\delta q^2}{2\gamma_5}+\frac{\kappa\delta^2q^3}{2\gamma_5^\frac{3}{2}}+\frac{(\rho+\frac{1}{h^3}+\mathcal{O}(\sqrt{\delta}))q^4}{\gamma_5^2}+\mathcal{O}(\abs{p}^2\abs{(\delta,q,p)})+\mathcal{O}(\abs{(q,p)}^2\abs{(\delta,q,p)}^2).
\end{equation*}
We introduce the scaled variables
\begin{equation*}
X=\frac{\sqrt{\delta}x}{\sqrt{\gamma_5}},\ q(x)=\frac{\sqrt{\delta}Q(X)}{\sqrt{\gamma_5}},\ p(x)=\frac{\delta P(X)}{\gamma_5}.
\end{equation*}
Hamilton's equations are then given by
\begin{align}
Q_X&=P+\mathcal{O}(\delta^\frac{1}{2}),\label{fullsystemo41}\\
P_X&=Q+\frac{3\kappa Q^2}{2\gamma_5}-\frac{4(\rho+\frac{1}{h^3})Q^3}{\gamma_5^2}+\mathcal{O}(\delta^\frac{1}{2}).\label{fullsystemo42}
\end{align}
We find the following solutions of the truncated system:
\begin{align*}
Q_\pm(X)&=\frac{2\gamma_5}{\pm\sqrt{\kappa^2+8(\rho+\frac{1}{h^3})}\cosh(X)+\kappa},\\
P_\pm(X)&=\frac{\mp2\gamma_5\sqrt{\kappa^2+8(\rho+\frac{1}{h^3})}\sinh(X)}{\left(\pm\sqrt{k^2+8(\rho+\frac{1}{h^2})}\cosh(X)+\kappa\right)^2}.
\end{align*}
That there exist symmetric homoclinic solutions of the system \eqref{fullsystemo41}-\eqref{fullsystemo42}, can be shown in the same way as in the non-critical case. In original variables the wave profiles are given by
\begin{equation*}
\eta_\pm(x)=\frac{2h_1\sqrt{\delta}}{\pm\sqrt{\kappa^2+8(\rho+\frac{1}{h^3})}\cosh(\frac{\sqrt{\delta}x}{\sqrt{\gamma_1}h_1})+\kappa}+\mathcal{O}(\delta^\frac{3}{2}),
\end{equation*}
which agrees with the results found in \cite{KM}.
\begin{appendices}
\section{Values of the coefficients $c_1$ and $c_3$}\label{c1c3}
From \eqref{coeffc1} and \eqref{coeffc3} we obtain
\begin{equation*}
c_1=-\frac{1}{\gamma_1},
\end{equation*}
\begin{align*}
c_3(k)&=-\frac{1}{2\gamma_1^2}\Bigg\{-\frac{1}{2}\bigg[\alpha+4\beta s^2-\frac{2s}{\tanh{2hs}}-\frac{2\rho s}{\tanh(2s)}\bigg]^{-1}\bigg[\frac{s^2}{\tanh^2(hs)}+\frac{4s^2}{\tanh(hs)\tanh(2hs)}-3s^2\\
&\quad -\rho\bigg(\frac{s^2}{\tanh^2(s)}+\frac{4s^2}{\tanh(s)\tanh(2s)}-3s^2\bigg)\bigg]^2+\bigg[\rho+\frac{1}{h}-\alpha\bigg]^{-1}\bigg[\frac{s^2}{\sinh^2(hs)}+\frac{2s}{h\tanh(hs)}\\
&\quad -\rho\bigg(\frac{s^2}{\sinh^2(s)}+\frac{2s}{\tanh(s)}\bigg)\bigg]^2+\frac{6s^3}{\tanh(hs)}-\frac{4s^3}{\tanh^2(hs)\tanh(2hs)}-\frac{4s^2}{h\tanh^2(hs)}\\
&\quad +\rho\bigg(\frac{6s^3}{\tanh(s)}-\frac{4s^3}{\tanh^2(s)\tanh(2s)}-\frac{4s^2}{\tanh^2(s)}\bigg)-\frac{3s^4\beta}{2}\Bigg\}.
\end{align*}
The Laurent series expansion of $2\gamma_1^2 c_3(k)$ is given by
\begin{equation*}
2\gamma_1^2c_3(k)=\frac{a_1}{k^5}+\frac{a_2}{k^3}+\frac{a_3}{k}+\mathcal{O}(k)
\end{equation*}
where
\begin{equation*}
a_1=\frac{855(\rho-\frac{1}{h^2})^2}{(h^3+\rho)}
\end{equation*}
It is clear that $a_1\geq 0$, with equality if and only if $\rho=1/h^2$. For $\rho=1/h^2$ we have that
\begin{align*}
a_2&=0,\\
a_3&=\frac{15h^2\left(1-\frac{1}{h^2}\right)^2}{2(1+h^5}+\frac{6(1+h)}{h^3}>0.
\end{align*}
 It follows that $c_3(k)>0$ for all $\rho\in [0,1)$, $h\in(0,\infty)$ and $k$ sufficiently small. These calculations were made using Wolfram Mathematica.%se över språket här
\section{Transversality}\label{B}
Consider the equation
\begin{equation}
\partial_x^4u+P\partial_x^2u+u-u^3=0\label{maineq}.
\end{equation}
Just as in \cite{BCT} and as in section \ref{11} this equation can be seen as Hamilton's equations for a Hamiltonian $H$.
According to \cite{G1} there exists a homoclinic solution $u\in C^\infty(\mathbb{R})$ of \eqref{maineq} for $P<2$. We want to show that the stable and unstable manifolds corresponding to the equilibrium solution $0$ of \eqref{maineq} intersect transversally in the zero energy set obtained from $H$. We give a brief outline of how to do this. The idea is to show that theorems 2.1--2.4 in \cite{BCT} hold for \eqref{maineq} as well. theorems 2.1,2.3 and 2.4 are proved in the same way as in \cite{BCT}. Using Corollary 3 from \cite{JB} we can show that for $P\leq -2$, any homoclinic solution of \eqref{maineq} is symmetric and single signed with exactly one critical point. This result corresponds to theorem 2.2 in \cite{BCT}.
We may assume without loss of generality that the maximum of a positive homoclinic solution is attained at $0$, so $u$ is an even function. This implies in particular that $\partial_x^3u(0)=0$. If we again use the fact that homoclinic solutions has exactly one critical point, we get
\begin{align*}
\partial_xu(x)&<0 \text{ for all }x\in(0,\infty)\\
\partial_xu(x)&>0\text{ for all } x\in (-\infty,0).
\end{align*}
Next we prove an inequality which corresponds to (2.2) on p. 234 in \cite{BCT}.
\begin{lemma}
A positive homclinic solution $u$ of \eqref{maineq} satisfies
\begin{equation}
\frac{u^3(0)-u(0)}{-\partial_x^2u(0)}= 1+l\label{ineq1},
\end{equation}
where $l>0$.
\end{lemma}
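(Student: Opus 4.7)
The plan is to use the conserved Hamiltonian of \eqref{maineq},
\[
H(x)=u'u'''-\tfrac12(u'')^2+\tfrac{P}{2}(u')^2+\tfrac12 u^2-\tfrac14 u^4,
\]
which vanishes along any solution decaying at $\pm\infty$, to pin down $u''(0)$ in terms of $u(0)$, and then to reduce the assertion $l>0$ to a sharp positivity statement about an auxiliary function. Since $u$ is even with its unique maximum at $x=0$, we have $u'(0)=u'''(0)=0$, so $H(0)=0$ gives $u''(0)^2=u(0)^2(1-\tfrac12 u(0)^2)$. The equality $u(0)^2=2$ can be ruled out because it would force $u'(0)=u''(0)=u'''(0)=0$ together with $u''''(0)=u(0)(u(0)^2-1)>0$ from \eqref{maineq}, making $0$ a local minimum. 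Hence $-u''(0)=u(0)\sqrt{1-u(0)^2/2}>0$, so $l$ is well defined, and $l>0$ will be equivalent to
\[
\phi(0):=u(0)^3-u(0)+u''(0)>0.
\]
The auxiliary bound $u(0)^2>1$, which I will need shortly, follows from testing \eqref{maineq} against $u$ and integrating: $\int(u'')^2-P\int(u')^2=\int u^2(u^2-1)\,dx$, whose left-hand side is strictly positive for $P\le -2$ and a nontrivial $u$.

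The key idea I want to exploit is that, with $\zeta:=u''$, equation \eqref{maineq} becomes the \emph{second-order} linear ODE
\[
\zeta''-|P|\zeta=F(u),\qquad F(u):=u^3-u,
\]
with $\zeta$ decaying at $\pm\infty$. Using the decaying Green's function $G(x)=-\tfrac{1}{2\sqrt{|P|}}e^{-\sqrt{|P|}|x|}$ of $\partial_x^2-|P|$ on the real line and the evenness of $F(u(\cdot))$, I will obtain
\[
u''(0)=-\frac{1}{\sqrt{|P|}}\int_0^{\infty}e^{-\sqrt{|P|}y}F(u(y))\,dy,\qquad \phi(0)=F(u(0))-\frac{1}{\sqrt{|P|}}\int_0^{\infty}e^{-\sqrt{|P|}y}F(u(y))\,dy.
\]

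To finish, I will bound the integral using the sign and monotonicity of $F\circ u$. Since $u$ decreases strictly on $(0,\infty)$ from $u(0)>1$ to $0$, there is a unique $y_0>0$ with $u(y_0)=1$; on $(y_0,\infty)$ the integrand $F(u)$ is negative, while on $(0,y_0)$ a direct computation gives $\tfrac{d}{dy}F(u(y))=(3u(y)^2-1)u'(y)<0$ (as $u>1>1/\sqrt3$ and $u'<0$ there), so $0<F(u(y))<F(u(0))$. Combining these,
\[
\int_0^{\infty}e^{-\sqrt{|P|}y}F(u(y))\,dy<F(u(0))\int_0^{\infty}e^{-\sqrt{|P|}y}\,dy=\frac{F(u(0))}{\sqrt{|P|}},
\]
which yields
\[
\phi(0)>F(u(0))\Bigl(1-\frac{1}{|P|}\Bigr)>0
\]
because $|P|\ge 2$ and $F(u(0))=u(0)(u(0)^2-1)>0$. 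The main thing to get right will be the reduction to the second-order problem for $u''$ and the correct normalization of the Green's function; once these are in place, the estimate is a routine monotonicity argument, and the main obstacle (namely producing a \emph{strict} positivity of $\phi(0)$ uniformly in the orbit) is absorbed into the gap $1-1/|P|\ge 1/2$ provided by the hypothesis $P\le -2$.
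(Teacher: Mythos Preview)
Your argument is correct, and it takes a genuinely different route from the paper's. The paper factors the linear part via $p+p^{-1}=-P$ with $p\in(0,1]$, introduces the auxiliary functions $h=u'''-pu'$ (satisfying $h''-p^{-1}h=3u^2u'$) and $g=u'$ (satisfying $g''-pg=h$), and applies the Hopf boundary point lemma twice: first to get $\partial_xh(0)=u''''(0)-pu''(0)>0$, then to get $u''(0)<0$. Plugging into the equation at $x=0$ yields the ratio $>p^{-1}\ge 1$. By contrast, you bypass the factorization entirely: you represent $u''$ through the Green's function of $\partial_x^2-|P|$ acting on $F(u)=u^3-u$, and then bound the resulting integral by exploiting that $F(u(\cdot))$ is strictly decreasing on $(0,y_0)$ and negative on $(y_0,\infty)$. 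Your route is more computational but arguably more transparent, and in fact it yields the sharper lower bound $\dfrac{u(0)^3-u(0)}{-u''(0)}>|P|$, whereas the paper obtains $>p^{-1}=|P|-p<|P|$. Both arguments use $P\le -2$ in an essential way (for you, to secure $1-1/|P|\ge 1/2$; for the paper, to define $p\in(0,1]$), and both rely on the single-signed, single-critical-point structure of $u$ established just before the lemma. Your use of the conserved Hamiltonian to pin down $u''(0)<0$ and $1<u(0)^2<2$ is a nice shortcut that the paper obtains instead via the second Hopf application.
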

\begin{proof}
As in \cite{BCT} we define $p\in (0,1]$ such that
\begin{equation*}
p+p^{-1}=-P
\end{equation*}
 Consider the function 
\begin{equation*}
h(x):=\partial_x^3u(x)-p \partial _xu(x),
\end{equation*}
and note that $h$ satisfies the equation
\begin{equation*}
\partial_x^2 h(x)-p^{-1}h(x)=3\partial_xu(x)u(x)^2.
\end{equation*}
Since 
\begin{align*}
&\partial_x u(x)> 0, \text{ for all } x\in (-\infty,0)\\
&h(0)=\lim_{x\rightarrow -\infty}h(x)=0,
\end{align*}
 it follows that $h(x)<0$, for all $x\in (-\infty,0]$.  The Hopf lemma then implies that $\partial_x h(0)>0$, that is
\begin{equation}
\partial_x^4u(0)-p \partial _x^2u(0)>0\label{appineq1}.
\end{equation}
Next we let $g(x)=\partial_xu(x)$. By definition
\begin{equation*}
\partial_x^2g(x)-pg(x)=h(x)< 0,\quad \text{ for all }x\in (-\infty ,0].
\end{equation*}
Again, using the Hopf lemma, we may conclude that 
\begin{equation}
\partial_x^2u(0)<0.\label{appineq2}
\end{equation}
From \eqref{appineq1}, we get
\begin{align*}
u(0)^3-u(0)&=\partial_x^4u(0)+P\partial_x^2u(0)\\
&=\partial_x^4u(0)-p\partial_x^2u(0)-p^{-1}\partial_x^2u(0)\\
&\geq -p^{-1}\partial_x^2u(0),
\end{align*}
which, together with \eqref{appineq2}, implies that
\begin{equation*}
\frac{u(0)^3-u(0)}{-\partial_x^2u(0)}>p^{-1},
\end{equation*}
and since $p^{-1}\geq 1$ we obtain \eqref{ineq1}.
\end{proof}
That the stable and unstable manifolds intersect transversally is now shown as in \cite{BCT}, p. 235--236.
\end{appendices}
\bigskip

{\bf Acknowledgments.} The author was supported by Grant No. 621-2012-3753 from the Swedish Research Council.

The author would also like to thank Erik Wahlén and Mark Groves for their help and advice when writing this article.
\normalsize
\bibliographystyle{plain}
\bibliography{biblio}
\end{document}